\newtheorem{prop}{Proposition}[section]
\newtheorem{thm}[prop]{Theorem}
\newtheorem{cor}[prop]{Corollary}
\newtheorem{lem}[prop]{Lemma}
\newtheorem{ddefn}[prop]{Definition}
\newtheorem{eex}[prop]{Example}
\newtheorem{rrem}[prop]{Remark}
\newtheorem{eexercise}[prop]{Exercise}
\newtheorem{con}[prop]{Conjecture}
\newtheorem{hhome}[prop]{Homework}
\newtheorem{nnumber}[prop]{}
\newenvironment{defn}{\begin{ddefn}\rm}{\end{ddefn}}
\newenvironment{ex}{\begin{eex}\rm}{\end{eex}}
\newcommand\p{\partial}
\newcommand\pscm{positive scalar curvature metric}
\newcommand{\nb}{\nobreakdash}
\newcommand\wh{\widehat}
\newcommand\opn{\operatorname}
\renewcommand\index{\opn{index}}
\newcommand\scr{\mathscr}
\newcommand{\into}{\hookrightarrow}
\newcommand\C{\mathbb C}
\renewcommand\P{\mathbb P}
\newcommand\Z{\mathbb Z}
\title{An invariant related to the existence of conformally compact Einstein fillings}
\author{Matthew J. Gursky}
\address{Department of Mathematics
         University of Notre Dame\\
         Notre Dame, IN 46556}
\email{\href{mgursky@nd.edu}{mgursky@nd.edu}}
\author{Qing Han}
\address{Department of Mathematics\\
         University of Notre Dame\\
         Notre Dame, IN 46556}
\email{\href{qhan@nd.edu}{qhan@nd.edu}}
\address{Beijing International Center for Mathematical Research\\
         Peking University\\
         Beijing, 100871, China}
\email{\href{qhan@math.pku.edu.cn}{qhan@math.pku.edu.cn}}
\author{Stephan Stolz}
\address{Department of Mathematics
         University of Notre Dame\\
         Notre Dame, IN 46556}
\email{\href{Stephan.A.Stolz.1@nd.edu}{Stephan.A.Stolz.1@nd.edu}}
\begin{document}

\maketitle



\begin{abstract}
We define an invariant for compact spin manifolds $X$ of dimension $4k$ equipped with a metric $h$ of positive Yamabe invariant on its boundary. The vanishing of this invariant is a necessary condition for the conformal class of $h$ to be the conformal infinity of a conformally compact Einstein metric on $X$.
\end{abstract}

\tableofcontents

\section{Introduction}  \label{intro}

Let $\mathring{X}$ be the interior of a compact $(n+1)$-dimensional manifold $X$ with boundary $\partial X = M$.  A Riemannian metric $g_{+}$ on $\mathring{X}$ is said to be {\em conformally compact} if there is a defining function $\rho : X \rightarrow \mathbb{R}$ with $\rho > 0$ in $\mathring{X}$, $\rho = 0$ on $M$, $d\rho \neq 0$ on $M$, such that $\rho^2 g_{+}$ extends to a metric $g$ on $X$.  We will assume that the compactified metric $g$ is at least $C^2$ up to the boundary.   This compactification defines a conformal class of metrics on the boundary: If $h = g_{| M}$ is the induced metric, then the conformal class $[h]$ is called the {\em conformal infinity} of $(\mathring{X},g_{+})$.

If in addition $g_{+}$ satisfies the Einstein condition with negative Einstein constant, which we normalize so that
\begin{align} \label{PEdef}
Ric_{g_{+}} = - n g_{+},
\end{align}
then $g_{+}$ is called a {\em CCE} (for `conformally compact Einstein') or {\em Poincar\'e-Einstein} metric, and $(\mathring{X}, g_{+})$ is a CCE manifold.  The basic example of a CCE manifold is the Poincar\'e model of hyperbolic space with $X = D^{n+1}$, the closed unit disk in $\mathbb{R}^{n+1}$, and $g_{+} = g_{\mathbb{H}}$ the hyperbolic metric given by
\begin{align*}
g_{\mathbb{H}} = \dfrac{4}{(1-|x|^2)^2}dx^2.
\end{align*}
In this example the conformal infinity is the standard conformal structure on the round sphere $S^n$.  CCE manifolds play a fundamental role in the Fefferman-Graham theory of conformal invariants (see \cite{FG}), and in the AdS/CFT correspondence in quantum field theory (see, for example, \cite{Maldecena2}).

In this paper we are interested in the existence problem for CCE metrics:  given a conformal class of metrics on the boundary $M = \partial X$, does the interior admit a CCE metric whose conformal infinity is the given conformal class (i.e., a CCE `filling')?  In addition to explicit examples there are a number of perturbative constructions in the literature; e.g., \cite{AndersonCFT}, \cite{AndersonGAFA}, \cite{OB1}, \cite{OB2}, \cite{OB3}, \cite{CE}, \cite{CD2}, \cite{CD1}, \cite{GLee1991}, \cite{Lee2006},  \cite{MazzeoPacard}, \cite{MV}).  However, in \cite{GH} the first two authors showed that the existence problem is not always solvable: the (standard) seven-dimensional sphere admits infinitely many metrics, which can be chosen in different components of the space of positive scalar curvature metrics, whose conformal classes cannot be the conformal infinity of a CCE metric defined on the eight-dimensional ball.  The same construction can be used to prove obstructions to existence for conformal classes on all spheres $S^{4k - 1}$, where $k \geq 2$.

In this note we extend the work of \cite{GH} in several significant ways.  To give an overview of our results it will be convenient to introduce the following terminology:

\begin{defn} Let $X$ be a compact $(n+1)$-dimensional manifold with boundary $\partial X = M$, and $h$ a Riemannian metric on $M$.  We say that $(X,h)$ is {\em fillable} if there is a CCE metric $g_{+}$ defined in $\mathring{X}$ whose conformal infinity is given by $[h]$. 
\end{defn}

In Section \ref{alphaDef} we introduce an index invariant that provides an obstruction to the existence of CCE fillings for $(X,h)$, where $h$ is a metric with positive Yamabe invariant on the boundary of a compact spin manifold $X$ of dimension $4k$. This  integer valued invariant $I(X,h)$ is given by the index of the Dirac operator on $X$ with respect to a metric $g$ on $X$ which restricts to $h$ on $\p X$, and for which the boundary is totally geodesic.  A key observation is that the vanishing of $I(X,h)$ is a necessary condition for $(X,h)$ to be fillable.

Using properties of the index we then prove an important ``Gluing Lemma'' for the invariant $I$ (Lemma \ref{lem:gluing}).  As an application of the lemma and the fact that \pscm s can be extended over handles of codimension $\ge 3$,  we construct manifolds with boundary $X$ and Riemannian metrics $h$ defined on $M = \partial X$ for which $I(X,h) \neq 0$.  In particular, we prove

\begin{thm}\label{introThm} Let $X$ be compact spin manifold of dimension $4k\ge 8$ with boundary $M$, and suppose that the space $\mathscr{Y}^{+}(M)$ of metrics on $M$ with positive Yamabe invariant is non-empty.
Then there are infinitely many connected components of $\mathscr{Y}^{+}(M)$ such that for any metric $h$ in the component, $(X,h)$ is not fillable.
\end{thm}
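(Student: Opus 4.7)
My plan is to exhibit infinitely many PSC metrics $h_n \in \mathscr{Y}^+(M)$ whose indices $I(X, h_n) \in \mathbb{Z}$ take infinitely many distinct values. Since $I(X, \cdot)$ is integer-valued and locally constant on $\mathscr{Y}^+(M)$ (a standard consequence of the deformation invariance of APS-type indices with totally geodesic PSC boundary), such metrics will lie in infinitely many distinct path components. Combined with the fact, established in Section~\ref{alphaDef}, that $I(X, h) = 0$ whenever $(X, h)$ is fillable, this will yield the theorem.

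First I fix any $h_0 \in \mathscr{Y}^+(M)$ (given by hypothesis), and fix a closed simply-connected spin $4k$-manifold $P$ with $\hat{A}(P) = a \neq 0$ (e.g., a Bott manifold, which exists in every dimension $4k \geq 8$). For each $n \geq 1$ I will build a spin cobordism $V_n$ from $M$ to $M$, topologically $(M \times [0,1]) \,\#\, nP$, carrying a PSC metric whose restrictions to the two (totally geodesic) boundary components are $h_0$ and a new PSC metric $h_n$. The dimension hypothesis $4k \geq 8$ is essential here: by working symmetrically from both ends of the cobordism, the handles needed to insert the $n$ copies of $P$ can all be chosen of codimension $\geq 3$, which is the regime in which the Gromov-Lawson surgery theorem transports PSC.

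Once $V_n$ is in hand, I apply the Gluing Lemma (Lemma~\ref{lem:gluing}) to the gluing $X \cup_M V_n$ along the $(M, h_0)$-end, obtaining
\[
I(X \cup_M V_n, h_n) = I(X, h_0) + I(V_n, h_0 \sqcup h_n).
\]
On the other hand, $X \cup_M V_n$ is diffeomorphic to $X \,\#\, nP$, and applying the Gluing Lemma at each of the $n$ interior connect-sum necks (using the computation $I(P \setminus D^{4k}, g_{\mathrm{round}}) = \hat{A}(P) = a$) gives
\[
I(X \,\#\, nP, h_n) = I(X, h_n) + na.
\]
Comparing the two expressions yields $I(X, h_n) - I(X, h_0) = I(V_n, h_0 \sqcup h_n) - na$, and arranging the construction of $V_n$ so that the right-hand side is genuinely nonzero and unbounded in $n$ produces the desired infinite family of values.

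The main obstacle will be the construction of $V_n$: since $P$ itself admits no PSC metric (as $\hat{A}(P) \neq 0$), the naive $(M \times [0,1]) \,\#\, nP$ does not carry a PSC metric extending the product boundaries, and the $P$-topology must instead be introduced through an interleaved sequence of codimension-$\geq 3$ handle attachments whose availability relies precisely on the hypothesis $4k \geq 8$. Verifying that the resulting $I(V_n, h_0 \sqcup h_n) - na$ takes infinitely many distinct values as $n$ varies, through careful bookkeeping of the Gluing Lemma applications and of the $\eta$-invariants of $(M, h_n)$, will be the technical heart of the argument.
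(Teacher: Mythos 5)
Your overall frame is the same as the paper's: the locally constant integer invariant $I(X,\cdot)$ on $\mathscr Y^+(M)$, the Gluing Lemma, and closed spin manifolds with $\wh A\neq 0$ as the source of nontriviality. But as written the proposal has a genuine gap: the construction of the PSC cobordism $V_n\cong (M\times[0,1])\#\,nP$ with a \pscm\ that is a product near the boundary and restricts to $h_0$ and $h_n$ is exactly the hard step, and you explicitly defer it. This is not a routine appeal to Gromov--Lawson: you need (i) $P$ to be $2$\nb-connected, not merely simply connected (otherwise $\pi_2\bigl((M\times I)\#nP,\,M\times\{1\}\bigr)\neq 0$ and no relative handle decomposition with all handles of codimension $\ge 3$ can exist; one must first kill $\pi_2(P)$ by surgeries, which preserve $\wh A$), and (ii) a Wall/Smale handle-trading argument showing that $(M\times I)\#nP$ is built from a collar of the far boundary by handles of codimension $\ge 3$ only, after which Gajer's extension theorem produces the metric. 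These are precisely the ingredients the paper assembles in Sections \ref{sec:gluing}--\ref{sec:exotic_pscm} (Gajer applied to a $2$\nb-connected $Y\setminus\mathring D^{4k}$, giving Proposition \ref{prop:exotic_pscm}, and Gajer applied to the trace of a $0$\nb-surgery, giving Proposition \ref{prop:pscm_conn_sum}); the paper's route through exotic \pscm s on $S^{4k-1}$ and the metric connected sum $h\# h'$ avoids the relative handle-trading over a general $M$ that your cobordism picture requires. Also, ``a Bott manifold exists in every dimension $4k\ge 8$'' is not quite right: the Bott manifold is $8$\nb-dimensional, and one takes products (with a Kummer or Joyce factor) to cover all $4k\ge 8$, as in Lemma \ref{lem:Ahat}.

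A second, smaller point: the final step you call the ``technical heart'' is actually immediate once $V_n$ exists. Since the metric on $V_n$ is a \pscm\ extending $h_0\amalg h_n$ with totally geodesic boundary, the Vanishing Lemma \ref{lem:vanishing} gives $I(V_n,h_0\amalg h_n)=0$, so your two Gluing Lemma computations yield $I(X,h_n)=I(X,h_0)-n\wh A(P)$ with no $\eta$\nb-invariant bookkeeping at all; these values are distinct and eventually nonzero, which (with local constancy of $I$ and the fillability obstruction of Section \ref{alphaDef}) finishes the argument. So the plan is salvageable and close in spirit to the paper, but the proof is incomplete until the construction of $V_n$ (or, equivalently, the content of Propositions \ref{prop:exotic_pscm} and \ref{prop:pscm_conn_sum}) is actually carried out.
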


The main result of \cite{GH} is a special case of Theorem \ref{introThm}, where $X= D^8$, the $8$\nb-dimensional disk.
However, this theorem produces many other interesting examples of non-fillable pairs $(X,h)$. For example, the AdS Schwarzschild metrics of Hawking-Page give a family of CCE metrics on $D^2 \times S^{n-1}$ ($n\geq 3$) whose conformal infinities are given by the conformal class of the product metric on $S^1(r_m) \times S^{n-1}$, where $r_m > 0$ depends on the mass (see \cite{HP}).  Since $\mathscr{Y}^{+}(S^1 \times S^{n-1})$ is non-empty, by Theorem \ref{introThm} there are infinitely many connected components of $\mathscr{Y}^{+}(S^1 \times S^{n-1})$ containing no metrics $h$ such that $(D^2 \times S^{n-1}, h)$ is fillable, provided $n=4k-1\ge 7$.   The construction of Hawking-Page also works if $S^{n-1}$ is replaced by an $(n-1)$-dimensional Einstein manifold with Einstein constant normalized to have the same value as the sphere, so we obtain the following corollary:

\begin{cor} \label{CorA}  Let $(N,g_N)$ be a closed, $(n-1)$-dimensional Einstein manifold with $Ric(g_N) = (n-2)g_N$, where $n = 4k-1 \geq 7$.  Then the Hawking-Page construction gives a CCE metric on $D^2\times N$, but there are infinitely many connected components of $\mathscr{Y}^{+}(S^1 \times N)$ containing no metrics $h$ with $(D^2 \times N, h)$ fillable.
\end{cor}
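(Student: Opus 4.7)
The plan is to apply Theorem \ref{introThm} to $X = D^2 \times N$ with boundary $M = \partial X = S^1 \times N$, treating the two assertions of the corollary separately: the existence of the Hawking--Page filling, and the abundance of non-fillable components.

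For the first assertion, I would verify that the Hawking--Page ansatz goes through with $N$ in place of $S^{n-1}$. Writing
\begin{equation*}
g_{+} = V(r)^{-1}\, dr^{2} + V(r)\, d\theta^{2} + r^{2} g_{N}, \qquad V(r) = 1 + r^{2} - \frac{2m}{r^{n-2}},
\end{equation*}
on $(r_{+},\infty) \times S^{1} \times N$, where $r_{+}$ is the largest positive root of $V$, a direct computation shows that $Ric_{g_{+}} = -n\, g_{+}$ whenever $Ric_{g_{N}} = (n-2) g_{N}$; the only curvature datum of $N$ entering the calculation is its Ricci tensor, so the classical spherical Hawking--Page calculation carries over verbatim. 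Choosing the $\theta$\nb-period equal to $4\pi/V'(r_{+})$ smoothly caps off the metric at $r = r_{+}$ to give a CCE metric on $D^{2} \times N$, and the defining function $\rho = 1/r$ realizes its conformal infinity as the conformal class of the product metric on $S^{1}(r_{m}) \times N$ for a suitable $r_{m} > 0$. This produces one fillable $h$ on $S^{1} \times N$ and establishes the first claim.

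For the second assertion, I would verify the three hypotheses of Theorem \ref{introThm} for $X = D^{2} \times N$. The dimension is $2 + (n-1) = 4k$, and $n = 4k-1 \geq 7$ gives $4k \geq 8$. Since $D^{2}$ is contractible, $w_{i}(D^{2} \times N) = w_{i}(N)$, so $X$ is spin precisely when $N$ is; I would record this as an implicit hypothesis on $N$, since it is required even for the invariant $I(X,h)$ to be defined. Finally, the Einstein condition on $N$ with constant $n - 2 > 0$ yields $R_{g_{N}} = (n-1)(n-2) > 0$, so the product metric on $S^{1} \times N$ has positive scalar curvature and therefore $\mathscr{Y}^{+}(S^{1} \times N) \neq \emptyset$. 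Theorem \ref{introThm} then immediately delivers infinitely many components of $\mathscr{Y}^{+}(S^{1} \times N)$ containing no metric $h$ with $(D^{2} \times N,h)$ fillable.

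The only genuinely non-routine step is the Hawking--Page Einstein computation, which is standard in the spherical case and adapts with no change; the rest is verification of hypotheses. The main subtlety worth flagging in the write-up is that the corollary, as stated, tacitly requires $N$ to be spin in order for Theorem \ref{introThm} to apply to $D^{2} \times N$.
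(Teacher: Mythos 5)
Your proposal is correct and follows essentially the same route as the paper, which states Corollary \ref{CorA} as an immediate consequence of the observation that the Hawking--Page construction goes through when $S^{n-1}$ is replaced by an Einstein manifold with $Ric(g_N)=(n-2)g_N$, combined with Theorem \ref{introThm} applied to $X=D^2\times N$, $M=S^1\times N$ (whose product metric has positive scalar curvature, hence positive Yamabe invariant). Your remark that $N$ must be spin so that $D^2\times N$ is spin is an accurate reading of a hypothesis left implicit in the paper's statement, since the invariant $I$ and Theorem \ref{introThm} require it.
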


The proof of  Theorem \ref{introThm} is based on modifying a given metric $h\in \scr Y^+(\p X)$, essentially by replacing it by the  connected sum $h\# h'$, where $h'$ is a \pscm\ on the sphere $S^{4k-1}$ with $I(D^{4k},h')\ne 0$ and using the Gluing Lemma to show $I(X,h\# h')=I(X,h)+I(D^{4k},h')$ (see Proposition \ref{prop:pscm_conn_sum}). The dimension restriction $4k\ge 8$ in Theorem \ref{introThm} comes from the fact that for $k=1$ the space of \pscm s on $S^3$ is connected by a result of Marquez \cite{Mar}, and hence $I(D^4,h')=0$ for any metric $h'\in \scr Y^+(S^3)$.   

Although we cannot produce $4$\nb-dimensional examples with $I(X,h)\ne 0$ by modifying the metric $h$, such examples can be constructed by modifying the  topology of the interior of $X$ by replacing $X$ by  the connected sum $X'=X\# Y$ of $X$ with a suitable closed spin manifold $Y$.  Again, the Gluing Lemma is used to calculate $I(X',h)$ and to show that $(X',h)$ is non-fillable in many cases.

\begin{thm} \label{ThmB}  Let $X$ be compact spin manifold of dimension $4k\ge 4$ with boundary $M$, and let $h$ be a Riemannian metric on $M$ with positive Yamabe invariant.  Then there is a compact spin manifold $X'$ with $\partial X' = M$, such that that $I(X',h) \neq 0$.  In particular, $(X',h)$ is not fillable.  \end{thm}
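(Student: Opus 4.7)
My plan is to follow the strategy suggested in the paragraph preceding the statement: form the interior connected sum $X' = X \# Y$, where $Y$ is a closed spin manifold of dimension $4k$ to be chosen below. Since the connected sum is performed in $\mathring X$, we have $\partial X' = M$, and the Gluing Lemma will express $I(X',h)$ in terms of $I(X,h)$ and the $\hat{A}$-genus $\hat{A}(Y)$.

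To set things up, I equip $X$ with a metric realizing $I(X,h)$ (so the metric restricts to $h$ on $M$ with $M$ totally geodesic) and remove a small open geodesic ball $D^{4k}\subset \mathring X$ whose bounding sphere carries a round metric $h_{\mathrm{rd}}$ of positive scalar curvature. I do the same to $Y$ and glue the two pieces along $S^{4k-1}$ using a positive scalar curvature collar $(S^{4k-1}\times [0,1],\, h_{\mathrm{rd}} + dt^2)$. The resulting manifold is $X'$, and it carries a natural metric that restricts to $h$ on $M$ and makes $M$ totally geodesic.

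I would then invoke the Gluing Lemma (Lemma~\ref{lem:gluing}) three times. First, applied to the closed manifold $Y = (Y\setminus D^{4k})\cup_{S^{4k-1}} D^{4k}$, using that the index of the Dirac operator on the closed spin manifold $Y$ equals $\hat{A}(Y)$ and that $(D^{4k}, h_{\mathrm{rd}})$ is fillable by the hyperbolic ball (so $I(D^{4k}, h_{\mathrm{rd}}) = 0$), we obtain $I(Y\setminus D^{4k}, h_{\mathrm{rd}}) = \hat{A}(Y)$. Second, applied to $X = (X\setminus D^{4k})\cup_{S^{4k-1}} D^{4k}$, the same reasoning gives $I(X\setminus D^{4k}, h\sqcup h_{\mathrm{rd}}) = I(X,h)$. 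Third, applied to $X' = (X\setminus D^{4k})\cup_{S^{4k-1}} (Y\setminus D^{4k})$, these combine to yield
\begin{align*}
I(X', h) = I(X, h) + \hat{A}(Y).
\end{align*}

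To finish, I choose $Y$ so that $\hat{A}(Y) \neq -I(X,h)$. Closed spin manifolds with nonzero $\hat{A}$-genus exist in every dimension $4k$---for instance, the $k$-fold Cartesian product $K3 \times \cdots \times K3$ has $\hat{A} = 2^k$---and by taking connected sums of enough copies one can realize $\hat{A}$-genera of arbitrarily large absolute value, in particular values distinct from $-I(X,h)$. Hence $I(X', h) \neq 0$, and the non-fillability of $(X', h)$ follows at once from the already established fact that $I(X',h) = 0$ is a necessary condition for fillability. The principal technical point to verify is that the Gluing Lemma applies cleanly in this setup; since the round metric on $S^{4k-1}$ is manifestly of positive scalar curvature (making the boundary Dirac operator invertible and ensuring the expected additivity of the index), I expect this to follow directly from the machinery already developed in the proof of the Gluing Lemma, with no additional obstacle.
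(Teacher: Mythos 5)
Your proposal is correct and follows essentially the same route as the paper: Theorem \ref{ThmB} is deduced from the formula $I(X\# Y,h)=I(X,h)+\wh A(Y)$ (Proposition \ref{prop:conn_sum}), which the paper proves exactly as you do, by applying the Gluing Lemma \ref{lem:gluing} to $X\# Y=(X\setminus\mathring D^{4k})\cup_{S^{4k-1}}(Y\setminus\mathring D^{4k})$, to $X=(X\setminus\mathring D^{4k})\cup_{S^{4k-1}}D^{4k}$ and to $Y$, using $I(D^{4k},h_0)=0$ for the round metric, and then choosing $Y$ with $\wh A(Y)\neq -I(X,h)$. The only cosmetic differences are that the paper gets $I(D^{4k},h_0)=0$ from the hemisphere metric via the Vanishing Lemma rather than from fillability by the hyperbolic ball, and uses the Kummer surface and Joyce $8$\nb-manifolds rather than products of $K3$ surfaces to realize suitable values of $\wh A(Y)$; also note that the auxiliary metric you describe (a geodesic ball need not have round, totally geodesic boundary) is unnecessary, since the Gluing Lemma only requires choosing a \pscm\ on the separating sphere and handles the metric construction internally.
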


In Section \ref{sec:exotic_pscm} we prove a result (Corollary \ref{cor:Ahat}) that is a slightly stronger statement than Theorem \ref{ThmB}  that answers an interesting question posed to us by S. Alexakis: is there a closed Riemannian manifold $(M,h)$ such that for every manifold $X$ with $\partial X = M$, we have $I(X,h) \neq 0$?  The answer turns out to be no:

\begin{prop}  \label{prop:intro} Suppose $M$ is the boundary of a compact spin manifold $X$ of dimension $4k \geq 4$, and let $h$ be a Riemannian metric of positive Yamabe invariant on $M$.  Then there is a compact spin manifold $X'$ with $\p X'=M$ such that $I(X',h) = 0$.
\end{prop}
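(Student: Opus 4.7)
The plan is to modify the topology of $X$ while keeping the boundary datum $(M,h)$ fixed, by setting $X' = X \# Y$ for a suitably chosen closed spin $4k$-manifold $Y$. Writing $m := I(X,h)$, the goal is to produce $Y$ with $\hat{A}(Y) = -m$, and then deduce $I(X',h) = 0$ from an additivity formula derived from the Gluing Lemma.

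First, I would establish the identity $I(X \# Y, h) = I(X,h) + \hat{A}(Y)$. The connected sum admits a ``neck'' diffeomorphic to $S^{4k-1}$ with its round (hence psc) metric $g_0$, along which we cut $X \# Y$ into $X_0 := X \setminus \mathrm{int}\, D^{4k}$ and $Y_0 := Y \setminus \mathrm{int}\, D^{4k}$. Three applications of Lemma \ref{lem:gluing} --- to $X \# Y = X_0 \cup_{S^{4k-1}} Y_0$, to $X = X_0 \cup_{S^{4k-1}} D^{4k}$, and to the closed manifold $Y = Y_0 \cup_{S^{4k-1}} D^{4k}$ (where $I(Y, \emptyset) = \hat{A}(Y)$ by Atiyah--Singer) --- combine algebraically to yield
\[ I(X \# Y, h) = I(X,h) + \hat{A}(Y) - 2\, I(D^{4k}, g_0). \]
The term $I(D^{4k}, g_0)$ vanishes: the APS formula reduces it to $\int_{D^{4k}} \hat{A}$ (since $g_0$ has positive scalar curvature boundary with vanishing $\eta$-invariant by the antipodal symmetry), and this integral is half of $\int_{S^{4k}} \hat{A} = \hat{A}(S^{4k}) = 0$ (for $k \geq 1$).

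Next, I would produce a closed spin $4k$-manifold $Y$ with $\hat{A}(Y) = -m$. In dimensions $4k \equiv 0 \pmod 8$, the image of $\hat{A}$ on closed spin manifolds is all of $\mathbb{Z}$; e.g.\ the Bott manifold $B^8$ satisfies $\hat{A}(B^8) = 1$, and products and disjoint unions realize arbitrary integers. In dimensions $4k \equiv 4 \pmod 8$, the spinor bundle carries a quaternionic structure forcing $\hat{A}(Y) \in 2\mathbb{Z}$, but every even value is attained (in dimension $4$, by disjoint copies of the K3 surface, for which $\hat{A}(K3) = 2$).

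The main obstacle is the parity matching in the cases $4k \equiv 4 \pmod 8$: one must know that $m = I(X,h)$ is automatically even, so that a $Y$ of the required genus exists. This follows from the same quaternionic structure: the APS boundary projection commutes with the quaternionic action on the spinor bundle, so $\ker$ and $\mathrm{coker}$ of the Dirac operator on $X$ are quaternionic $\mathbb{C}$-vector spaces, hence even-dimensional over $\mathbb{C}$; thus $m \in 2\mathbb{Z}$. Taking $X' = X \# Y$ with the chosen $Y$ then yields $I(X', h) = 0$, completing the proof.
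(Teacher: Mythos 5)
Your proposal is correct and takes essentially the same route as the paper: set $X'=X\# Y$, derive the additivity $I(X\# Y,h)=I(X,h)+\widehat A(Y)$ from the Gluing Lemma by cutting along the round $S^{4k-1}$ and discarding the disk terms, realize any integer (resp.\ any even integer) as $\widehat A(Y)$ using Bott/Joyce- and K3-type manifolds, and prove evenness of $I(X,h)$ for $k$ odd via the quaternionic structure on the spinor bundle, exactly as in Proposition \ref{prop:conn_sum} and Corollary \ref{cor:Ahat}. Two cosmetic remarks: $I(D^{4k},h_0)=0$ follows immediately from the Vanishing Lemma \ref{lem:vanishing} applied to the round hemisphere metric, bypassing your $\eta$-invariant computation, and in that computation the vanishing of $\eta$ for the round $S^{4k-1}$ should be attributed to an orientation-reversing isometry (a reflection), since the antipodal map preserves orientation on odd-dimensional spheres.
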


One can broaden the existence question for CCE metrics and ask for a given manifold $M$ and metric $h\in \scr Y^+(M)$, whether we can always find a manifold $X$ with $\partial X = M$ such that $(X,h)$ is fillable.  (Of course, one should assume that $M$ can be realized as the boundary of some manifold).  In view of the above proposition, the invariant $I$ cannot be used to detect a counterexample to this version of the existence problem.

We conclude with a remark about the organization of the paper.  In Section \ref{alphaDef} we define the invariant $I$, and establish some of its basic properties.  In Section \ref{sec:ex} we state the main technical results that are used in the proof of Theorems \ref{introThm} and \ref{ThmB}.  These results are based on the Gluing Lemma for the invariant $I$, and the construction of positive scalar curvature metrics on $S^{4k-1}$ with non-trivial invariant when $k \geq 2$.  The former is proved in Section \ref{sec:gluing}, while the latter is carried out in Section \ref{sec:exotic_pscm}.     \\

The first author acknowledges the support of NSF grants DMS-1509633 and DMS-1547292.  The second author acknowledges the support of NSF grant DMS-1404596. The third author acknowledges the support of NSF grant DMS-1547292.

\section{Defining the invariant} \label{alphaDef}

To define the invariant $I$ we need to recall some basic facts about the Dirac operator on spin manifolds with boundary (see, for example, \cite{BBW}).

Let $(X, g)$ be a smooth compact Riemannian spin manifold of dimension $4k$ with boundary $M = \partial X$.  Let $S(X)$ denote the associated spinor bundle, and $D = D(X,g) : S(X) \rightarrow S(X)$ the Dirac operator with respect to the Atiyah-Patodi-Singer boundary conditions.  Since the dimension is even,   $S(X)$ splits into the sub-bundles of positive and negative spinors: $S(X) = S^{+}(X) \oplus S^{-}(X)$.  This induces a splitting of the Dirac operator $D = D(X,g)$, given by
\begin{align*}
D = \left( \begin{array}{cc}
0 & D^{-} \\
D^{+} & 0 \end{array} \right).
\end{align*}
Moreover, the operator $D^{+} = D^{+}(X,g)$ is Fredholm with $L^2$-formal adjoint given by $D^{-}$.  The index  of $D^+(X,g)$ is well defined (see Chapters 7 and 20 of \cite{BBW}), and is given by
\begin{align} \label{idef}
\opn{index}(D^+(X,g)) = \dim\mathscr H^{+}(X,g) - \dim\mathscr H^{-}(X,g),
\end{align}
where $\mathscr H^{\pm}( X,g)$ is the space of positive/negative harmonic spinors.   Since the space of harmonic spinors is conformally invariant (see \cite{Hitchin}, Proposition 1.3), the index is also conformally invariant.

Assuming the metric is a product near the boundary, the classic result of Atiyah-Patodi-Singer (\cite{APS}, Theorem 4.2) gives an explicit formula for the index.   In Remark (2) on page 61 of \cite{APS}, the authors comment that it is also possible to give an expression for general metrics, but the formula needs to be modified by adding a term involving the second fundamental form of the boundary.  A discussion of the correction term appears in (\cite{EGH}, pp. 348 -- 349).  For the signature complex explicit formulas appear in \cite{GilkeyPP} and (\cite{DW}, Section 2), and these formulas have obvious extensions to the case of the Dirac operator.    The upshot is that when the boundary is totally geodesic, the correction term vanishes and we have
\begin{equation}\label{eq:APS1}
\opn{index}(D^+(X,g))
=\int_{X}\wh A(p) - \frac12\eta(D(\p X,g_{|\p X})) - \frac 12\dim\mathscr H(\p X, g_{| \p X}),
\end{equation}
where
\begin{itemize}
\item $\wh A(p)$ is the Hirzebruch $\wh A$\nb-polynomial applied to the Pontryagin forms $p_i$ determined by the Riemannian metric on $X$; \vskip.05in
\item $\eta(D(\p X,g_{|\p X}))$ is the value of the $\eta$\nb-function of the Dirac operator on $\p X$ at $s=0$; \vskip.05in
\item $\mathscr H(\p X, g_{|\p X})$ is the space of harmonic spinors on $\partial X = M$. \vskip.1in
\end{itemize}

Now suppose we are given a metric $h$ on $M = \partial X$.  To define the index we need to extend $h$ to $X$, and in view of the preceding comments we will restrict to {\em totally geodesic extensions}; i.e., metrics $g$ in $X$ with $g_{| M} = h$ such that $M$ is totally geodesic with respect to $g$.

\begin{lem}\label{lem:index} Assume $h$ is a metric on $M = \partial X$ with positive Yamabe invariant $Y(M,[h]) > 0$.  If $g$ is a totally geodesic extension of $h$, then the index is given by
\begin{equation}\label{eq:APS2}
\opn{index}(D^+(X,g))
=\int_{X}\wh A(p) - \frac12\eta(D(\p X,h)).
\end{equation}
\end{lem}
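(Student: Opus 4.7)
The plan is to start from the Atiyah--Patodi--Singer formula \eqref{eq:APS1}, which already applies since $g$ is a totally geodesic extension of $h$, and show that the correction term $\frac{1}{2}\dim \mathscr H(\p X, g_{|\p X}) = \frac{1}{2}\dim \mathscr H(\p X, h)$ vanishes under the positive Yamabe hypothesis.

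First, because the Yamabe invariant $Y(M,[h])$ is positive, the conformal class $[h]$ contains a Yamabe minimizer $\tilde h$ of constant positive scalar curvature. By the Lichnerowicz--Weitzenb\"ock formula on the closed spin manifold $(\p X,\tilde h)$,
\begin{equation*}
D(\p X,\tilde h)^2 = \nabla^*\nabla + \tfrac14 R_{\tilde h},
\end{equation*}
so pairing with a harmonic spinor $\psi$ and integrating yields
\begin{equation*}
0 = \int_{\p X}\bigl(|\nabla \psi|^2 + \tfrac14 R_{\tilde h}|\psi|^2\bigr)\,d\opn{vol}_{\tilde h},
\end{equation*}
which forces $\psi\equiv 0$. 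Hence $\mathscr H(\p X,\tilde h)=0$.

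Second, I invoke the conformal invariance of the space of harmonic spinors already cited in the paper (Hitchin \cite{Hitchin}, Proposition 1.3): the dimension $\dim \mathscr H(\p X, h')$ is the same for every metric $h'\in[h]$. Applied to $h$ and $\tilde h$, this gives $\dim\mathscr H(\p X,h)=\dim \mathscr H(\p X,\tilde h)=0$.

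Substituting this into \eqref{eq:APS1} kills the last term and produces \eqref{eq:APS2}. There is no real obstacle here; the lemma is essentially the statement that Lichnerowicz plus conformal invariance eliminates the $\dim\mathscr H$ correction in APS whenever the boundary conformal class is positive Yamabe. The only thing to double-check in writing up is that $g$ being a totally geodesic extension makes the Atiyah--Patodi--Singer boundary operator identified with $D(\p X, g_{|\p X})=D(\p X,h)$ so that the $\eta$-invariant term is literally $\eta(D(\p X,h))$ as stated.
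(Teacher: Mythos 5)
Your proposal is correct and follows essentially the same route as the paper: the authors also deduce $\dim\mathscr H(\p X,h)=0$ from Lichnerowicz's vanishing theorem applied to a positive scalar curvature representative of $[h]$ together with Hitchin's conformal invariance of harmonic spinors, and then drop the last term in \eqref{eq:APS1}. (Your appeal to a Yamabe minimizer is slightly more than needed—any conformal metric of positive scalar curvature, e.g.\ from the first eigenfunction of the conformal Laplacian, suffices—but this does not affect the argument.)
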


\begin{proof}   Since $h$ has positive Yamabe invariant and $\mathscr H(\p X, g_{ \p X})$ is conformally invariant, it follows from the vanishing result of Lichnerowicz \cite{Lich} that $\dim\mathscr H(\p X, h) = 0$.  Therefore, (\ref{eq:APS2}) follows from (\ref{eq:APS1}).  \end{proof}

Our next observation is that index does not depend on the choice of the totally geodesic extension:

\begin{prop}\label{prop:1} Let $X$ be a smooth, compact spin manifold of dimension $4k$ with boundary, and let $h$ be a metric on $M = \partial X$ with positive Yamabe invariant.  If $g_0$ and $g_1$ are totally geodesic extensions of $h$, then $\opn{index}(D^+(X,g_0)) =  \opn{index}(D^+(X,g_1))$.

More generally, the index only depends on $X$ and the path component of $h$ in $\mathscr{Y}^{+}(M)$, the space of Riemannian metrics with positive Yamabe invariant on $M$.
\end{prop}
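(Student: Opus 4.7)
The plan is to realize the integer-valued index as the Fredholm index of a continuous family of boundary-value problems and invoke homotopy invariance. The key preliminary observation is that for any Riemannian metric $g$ on $X$ with $g|_M = h$ (not necessarily totally geodesic), the Dirac operator $D^+(X,g)$ together with the APS boundary condition --- the spectral projection of $D(M,h)$ onto the non-negative spectrum --- defines a Fredholm operator, because Lichnerowicz's theorem and the conformal invariance of harmonic spinors force $D(M,h)$ to be invertible. I will not need the metric to be totally geodesic at intermediate times; the hypothesis on $g_0$ and $g_1$ is used only to ensure that the integer we compute at the endpoints really is the index defined in Lemma \ref{lem:index}.

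For the first assertion, given two totally geodesic extensions $g_0, g_1$ of $h$, I would consider the linear interpolation $g_t = (1-t)g_0 + tg_1$. Each $g_t$ is a Riemannian metric on $X$ with $g_t|_M = h$. Because the boundary metric (and hence the boundary Dirac operator and its APS spectral projection) is fixed throughout the path, the family $D^+(X,g_t)$ with APS boundary condition is a continuous family of Fredholm operators acting with a common boundary projection. Homotopy invariance of the Fredholm index then forces $\opn{index}(D^+(X,g_0)) = \opn{index}(D^+(X,g_1))$.

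For the stronger claim about path components of $\mathscr{Y}^+(M)$, I would proceed similarly: given a path $h_s$ in $\mathscr{Y}^+(M)$, extend each $h_s$ to a totally geodesic $g_s$ on $X$ depending continuously on $s$ (for instance, by taking a product collar $h_s + dr^2$ and smoothly matching a fixed interior extension). Since each $h_s$ has positive Yamabe invariant, $D(M,h_s)$ is invertible for every $s$, so the spectrum stays away from $0$ and the spectral projection onto the non-negative eigenspace varies continuously in $s$. The family $D^+(X,g_s)$ with its varying APS boundary condition is then a continuous family of Fredholm operators, so the integer-valued index is locally constant, hence constant on $[0,1]$.

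The main technical obstacle is making rigorous the phrase ``continuous family of Fredholm operators'': the spinor bundle $S(X,g)$ depends on the metric through the lift of the orthonormal frame bundle, and the APS projection in the second assertion depends on $h_s$. Both subtleties are handled by the standard device of identifying spinor bundles via parallel transport along a generalized cylinder (cf.\ Chapter 20 of \cite{BBW}); once this identification is in place, continuity of $D^+(X,g_t)$ in the graph norm together with continuity of the boundary projection is enough to apply homotopy invariance of the index.
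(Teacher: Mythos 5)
Your proposal is correct, but for the first assertion it takes a genuinely different route from the paper. The paper stays inside the class of totally geodesic extensions: it joins $g_0$ to $g_1$ by a path of totally geodesic extensions of the fixed boundary metric $h$, applies the index formula \eqref{eq:APS2} (which is only valid when the boundary is totally geodesic) at each time, and concludes from the continuity of the Pontryagin forms together with integrality of the index. You instead interpolate linearly through extensions that are merely required to restrict to $h$, keep the APS projection fixed, and invoke homotopy invariance of the Fredholm index; this avoids the APS formula entirely and gives the slightly stronger conclusion that the index is independent of the extension among \emph{all} metrics restricting to $h$ on $M$, at the price of having to verify Fredholmness and norm-continuity of the APS problem for non-totally-geodesic metrics, including the identification of spinor bundles along the path (note that your interpolated metrics agree on $TM$ but not necessarily on $TX|_M$, so the boundary identification really does require the Bourguignon--Gauduchon/generalized-cylinder device you allude to; the pointer to Chapter 20 of \cite{BBW} is not quite the right reference for that). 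Two small corrections: Fredholmness of the APS problem does not rest on invertibility of $D(M,h)$ --- it holds for spectral projection boundary conditions regardless --- rather, invertibility is what you need in the second part to keep the projections $P^s_{\geq 0}$ varying continuously. For that second part your argument is essentially the paper's: the paper produces the uniform spectral gap along the path quantitatively, via Hijazi's inequality and a uniform lower bound on the first eigenvalue of the conformal Laplacian, whereas you obtain it qualitatively from invertibility of each $D(M,h_s)$ (Lichnerowicz plus conformal invariance of harmonic spinors) combined with continuity in $s$ and compactness of $[0,1]$; that uniformity should be stated explicitly, but it is a routine point, not a gap.
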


\begin{proof} Using the formula \eqref{eq:APS2}, it is easy to see that the index does not depend on the choice of extension.  However, to prove the more general claim that the index only depends on the path component of $h$ in $\mathscr{Y}^{+}(M)$ we will give an argument that uses basic properties of the index, and does not rely on an explicit formula.

 First, suppose $g_0$ and $g_1$ are totally geodesic extensions, and let $\{ g(t)\}_{0 \leq t \leq 1}$ be a continuous path of totally geodesic extensions of $h$ with $g(0) = g_0$ and $g(1) = g_1$.  (We note that such a path can be constructed in various ways; for example, we can first deform $g_0$ and $g_1$ to be product metrics near the boundary, and then it is easy to connect two such metrics by a continuous path of totally geodesic extensions).   Since the induced metric on the boundary is fixed, the term in the index formula (\ref{eq:APS2}) given by the $\eta$-invariant does not depend on $t$.  Also, the Pontryagin forms depend continuously on the metric, so the integral of the Hirzebruch $\wh A$\nb-polynomial varies continuously with $t$.  It follows that the index varies continuously as well, but since it is integer-valued it must be constant.

More generally, suppose $h_0$ and $h_1$ are two metrics in the same path-connected component of $\mathscr{Y}^{+}(M)$.  Let $\{ h(t) \}_{0 \leq t \leq 1}$ be a continuous path in this component with $h(0) = h_0$ and $h(1) = h_1$.  Also, let $\{ g(t)\}_{0 \leq t \leq 1}$ be a continuous path of metrics in $X$ such that each $g(t)$ is a totally geodesic extension of $h(t)$.  If $\widetilde{g}(t)$ is another path of totally geodesic extensions with $\widetilde{g}(t)_{|M} = h(t)$, then by the preceding argument
\begin{align*}
\opn{index}(D^+(X,\widetilde{g}(t))) =  \opn{index}(D^+(X,g(t)))
\end{align*}
for each $t$. Therefore, the index is independent of the path of totally geodesic extensions we choose.  Arguing as before, we want to show that the index does not depend on $t$, although in this case the boundary metric is not fixed and we need to use the invariance of the index under continuous deformations of the operator.   In particular, we need to show that the APS boundary condition (see \cite{APS}, Section 2) depend continuously on the parameter $t$.

Recall that $D(M,h(t))$, the Dirac operator on $M$, has discrete spectrum with eigenvalues
\begin{align*}
\cdots \leq \lambda_{-2}(t) \leq \lambda_{-1}(t) < 0 \leq \lambda_0(t) \leq \lambda_1(t) \leq \lambda_2(t) \leq \cdots
\end{align*}
If $\mathcal{S}$ denotes the spinor bundle of $M$, we can choose an orthonormal basis of $L^2(M,\mathcal{S})$ consisting of eigenspinors $\psi_i$ with $D(M,h(t))\psi_i = \lambda_i(t) \psi_i$.  The APS boundary condition is a non-local condition defined in terms of the projection operator
\begin{align*}
P_{\geq 0}^t : L^2(M,\mathcal{S}) \rightarrow L^2(M,\mathcal{S}),
\end{align*}
where $P_{\geq 0}^t$ is the projection onto the space spanned by the eigenspinors $\psi_i$ with $\lambda_i(t) \geq 0$.
Since $h(t)$ is in $\mathscr{Y}^{+}(M)$, there is a positive lower bound for the first eigenvalue $\mu_1(M,h(t))$ of the conformal laplacian:
\begin{align} \label{gap}
\mu_1(M,h(t)) \geq \mu_0 > 0, \  0 \leq t \leq 1.
\end{align}
It follows from (\cite{Hijazi}, Theorem A) that
\begin{align*}
\lambda_0(t)^2 \geq \dfrac{4k-1}{8(2k-1)}\mu_0 > 0.
\end{align*}
Therefore, the positive spectrum has a uniform lower bound for all $0 \leq t \leq 1$, and $P_{\geq 0}^t$ varies continuously with $t$.  As before we conclude that the index is constant in $t$.    \end{proof}

In view of the preceding result it is natural to make the following definition:

\begin{defn} Let $X$ be a smooth compact spin manifold of dimension $4k$ with boundary $M$, and let $h$ be a metric on $M$ with positive Yamabe invariant.  We define the invariant $I(X,h)\in \Z$ by
\begin{align} \label{alphadef}
I(X,h):= \opn{index}(D^+(X,g))
\end{align}
where $g$ is any totally geodesic extension of $h$ (by Proposition \ref{prop:1}, $I(M,h)$ is independent of the choice of extension $g$).  If $X$ is closed (i.e., has no boundary), then we write $I(X)$ instead of $I(X,h)$. By the Index Theorem  $I(X)$ is equal to $\wh A(X):=\int_X \wh A(p)$, Hirzebruch's $\wh A$\nb-genus.
\end{defn}

By Proposition \ref{prop:1}, $I$ is also a conformal invariant of the boundary metric.  In fact, the proposition tells us that it can be viewed as a mapping $I : \mathscr{Y}^{+}(M) \rightarrow \Z$ that is constant on each connected component of the domain.  However, to keep the notation simple we will use $I(M,h)$ to denote the invariant, despite its somewhat inaccurate connotations.

By appealing to a result of Hijazi-Montiel-Rold\`{a}n \cite{HMR}, we can give conditions which imply the vanishing of the $I$-invariant:

\begin{lem}{\bf (Vanishing Lemma)} \label{lem:vanishing} Let $X$ be a smooth compact spin manifold of dimension $4k$ with boundary $M$, and let $h$ be a Riemannian metric on $M$ with positive Yamabe invariant.  If the conformal class of $h$ contains a representative $h_0$ that admits a totally geodesic extension $g$ of positive scalar curvature, then $I(X,h) = 0$.
\end{lem}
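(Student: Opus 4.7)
The plan is to reduce the problem to showing that the harmonic spinor spaces $\scr H^{\pm}(X,g)$ both vanish when $g$ has positive scalar curvature and totally geodesic boundary. Then $I(X,h)=0$ follows immediately from the definition \eqref{alphadef} combined with \eqref{idef}.

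First, I would argue that we may compute $I(X,h)$ using the given metric $g$. Since the Yamabe invariant is a conformal invariant, every metric in the conformal class $[h_0]=[h]$ lies in $\scr Y^+(M)$, and the path $h_t:=e^{2tf}h_0$ (where $h=e^{2f}h_0$) connects $h_0$ to $h$ inside $\scr Y^+(M)$. By the second assertion of Proposition \ref{prop:1}, $I(X,h)=I(X,h_0)$, and the latter can be computed with any totally geodesic extension of $h_0$; in particular, with the given $g$. So it suffices to prove that $\opn{index}(D^+(X,g))=0$.

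For this, I would invoke the result of Hijazi--Montiel--Rold\'an \cite{HMR}. Their Friedrich-type inequality for the Dirac operator on a spin manifold with boundary, under the APS boundary condition, shows that every eigenvalue $\lambda$ of $D(X,g)$ satisfies a lower bound of the form
\begin{equation*}
\lambda^2 \ \geq \ \tfrac{1}{4}\inf_X R_g,
\end{equation*}
provided the mean curvature of $\p X$ is nonnegative. In our setting the boundary is totally geodesic, so the mean curvature vanishes identically, and $R_g>0$ by hypothesis. Hence $0$ is not an eigenvalue of $D(X,g)$ with APS boundary conditions, i.e.\ $\scr H^+(X,g)=\scr H^-(X,g)=0$. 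By \eqref{idef}, $\opn{index}(D^+(X,g))=0$, and therefore $I(X,h)=0$.

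The only real subtlety is the boundary contribution to the Weitzenb\"ock formula: unlike the closed case where Lichnerowicz's identity gives vanishing immediately from $R_g>0$, on a manifold with boundary one picks up a term involving the mean curvature of $\p X$ and the boundary values of the spinor. The totally geodesic hypothesis is precisely what kills this term, which is why the HMR framework applies cleanly. If one wanted a self-contained argument in place of quoting \cite{HMR}, the route would be to integrate the Lichnerowicz identity $D^2=\nabla^*\nabla+\tfrac{1}{4}R_g$ against a harmonic spinor $\psi$ satisfying the APS boundary condition, and then check that the boundary integral that arises is nonnegative because the second fundamental form vanishes.
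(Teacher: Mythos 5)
Your proof is correct and follows essentially the same route as the paper: both invoke the Hijazi--Montiel--Rold\`an eigenvalue estimate under APS boundary conditions, use the totally geodesic hypothesis to get vanishing mean curvature, and conclude from $R_g>0$ that there are no harmonic spinors, hence $\opn{index}(D^+(X,g))=0$. Your explicit reduction from $h$ to $h_0$ via Proposition \ref{prop:1} is a point the paper leaves implicit (it simply appeals to the conformal invariance of $I$), and your imprecise constant $\tfrac14$ in place of $\tfrac{n}{4(n-1)}$ is harmless since only positivity of the lower bound is used.
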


\begin{proof} This is an immediate corollary of the following:

\begin{thm} (See Theorem 2 of \cite{HMR})  Let $(X,g)$ be a compact Riemannian spin manifold of dimension $n$ with non-empty boundary $M$, and assume $M$ has
non-negative mean curvature (with respect to the inner normal). Under the Atiyah-Patodi-Singer boundary
condition, the spectrum of the Dirac operator $D(X,g)$ is a sequence of unbounded real numbers $\{ \lambda^{APS}_k \vert k \in \mathbb{Z} \}$ satisfying
\begin{align} \label{gapk}
\big( \lambda_k^{APS} \big)^2 \geq \dfrac{n}{4(n-1)} \inf_{X} R_g,
\end{align}
for all $k \in \mathbb{Z}$, where $R_g$ is the scalar curvature of $g$.
\end{thm}

If $[h]$ is a conformal class on $M =\partial X$ containing a metric $h_0$ that admits a totally geodesic extension $g$ with scalar curvture $R_g > 0$, then $M$ has zero mean curvature and (\ref{gapk}) holds.  It follows that there are no harmonic spinors, hence $\opn{index}(D^+(X,g)) = I(X,h) = 0$.
\end{proof}

Using the preceding result, we immediately deduce the following obstruction to CCE fillings:

\begin{cor} Let $X$ be a smooth compact Riemannian spin manifold of dimension $4k$ with boundary $M = \partial X$.  If $h$ is a Riemannian metric on $M$ with positive Yamabe invariant  such that $I(X,h)\neq 0$, then $(X,h)$ is not fillable.
\end{cor}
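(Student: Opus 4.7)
This corollary follows at once from the Vanishing Lemma (Lemma \ref{lem:vanishing}) provided that, whenever $(X,h)$ is fillable, one can exhibit a representative $h_0\in[h]$ together with a totally geodesic extension $g_0$ of $h_0$ to $X$ with $R_{g_0}>0$: the lemma then forces $I(X,h)=0$, contradicting the hypothesis. Thus the task reduces to producing such a compactification from a given CCE filling $g_+$.

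Since $[h]$ has positive Yamabe invariant, I may fix a representative $h_0\in[h]$ with $R_{h_0}>0$. Let $r$ be the geodesic defining function for $g_+$ normalized by $r^2 g_+\big|_{\p X}=h_0$, so that on a collar $U=[0,\epsilon)\times M$ the compactification $g:=r^2 g_+$ takes the Fefferman--Graham normal form $g=dr^2+h_r$ with $h_r=h_0+r^2 h_{(2)}+O(r^3)$. The Einstein condition $\opn{Ric}_{g_+}=-ng_+$ is precisely what forces the vanishing of the linear-in-$r$ term, so $M$ is totally geodesic in $(U,g)$. Moreover $h_{(2)}$ is a known (negative) multiple of the Schouten tensor of $h_0$, and the Gauss equation for the totally geodesic boundary then gives $R_g\big|_{r=0}$ as a positive multiple of $R_{h_0}$, so $R_g>0$ on a smaller collar $U'\subset U$.

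Next I would smoothly extend $g|_{U'}$ to a metric on all of $X$ (still called $g$), then perform a conformal change $\tilde g=u^{4/(n-1)}g$ with $u>0$ satisfying $u\equiv 1$ and $\p_\nu u\equiv 0$ on $\p X$. These boundary conditions guarantee that $\tilde g=g$ in a neighborhood of $\p X$, so $\tilde g$ restricts to $h_0$ and retains the totally geodesic boundary. The point is to choose $u$, supported in the interior of $X$, so that $R_{\tilde g}>0$ globally. This is a standard Yamabe-type problem: because $R_g>0$ is already available near $\p X$, one has the flexibility to first modify the interior extension of $g$ and then solve for $u$ as a positive first eigenfunction of the conformal Laplacian that is matched smoothly to the value $1$ along $\p X$.

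The main obstacle I foresee is precisely the last step --- producing a single compactification of $X$ that is simultaneously totally geodesic on $\p X$ and of positive scalar curvature globally. The geodesic defining function handles the totally geodesic condition and, combined with $R_{h_0}>0$, yields PSC in a collar; the global PSC extension then hinges on the conformal-deformation argument compatible with the boundary conditions $u\equiv 1$, $\p_\nu u\equiv 0$ needed to preserve both $h_0$ and the vanishing of the second fundamental form. Once such a $\tilde g$ is in hand, the Vanishing Lemma concludes the proof.
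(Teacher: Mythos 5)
Your reduction to the Vanishing Lemma is the right frame, and your collar analysis is fine: for the geodesic compactification determined by a representative $h_0\in[h]$ with $R_{h_0}>0$, the Einstein equation kills the linear term in the expansion, so the boundary is totally geodesic and $R_g=\tfrac{n}{n-1}R_{h_0}>0$ along $\partial X$. The fatal problem is the last step. After the collar you discard the Einstein metric entirely ("smoothly extend $g|_{U'}$ to a metric on all of $X$") and assert that a conformal factor $u>0$ with $u\equiv 1$, $\partial_\nu u\equiv 0$ on $\partial X$ can be found making the scalar curvature positive globally. That assertion is exactly the statement that $h_0$ admits \emph{some} positive scalar curvature extension with totally geodesic boundary, i.e.\ exactly the hypothesis of the Vanishing Lemma --- and it is obstructed by $I(X,h)$ itself. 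Since your construction uses the CCE metric only in a collar (where only the conformal infinity and its positive Yamabe representative enter), if it worked it would show $I(X,h)=0$ for \emph{every} spin $X$ and every $h$ of positive Yamabe type, contradicting Proposition \ref{prop:exotic_pscm} and Theorems \ref{introThm} and \ref{ThmB} (e.g.\ the metrics $h_\ell$ on $S^{4k-1}$ with $I(D^{4k},h_\ell)=\ell\neq 0$ have positive scalar curvature, so the collar step is available there too). So the "standard Yamabe-type problem" is not solvable in general with those boundary conditions; there is no eigenfunction argument that both keeps $u=1$, $\partial_\nu u=0$ on the boundary and forces $R_{\tilde g}>0$. (A smaller slip: $u=1$ and $\partial_\nu u=0$ on $\partial X$ do not give $\tilde g=g$ in a neighborhood of $\partial X$, only that $\tilde g$ induces $h_0$ and keeps the boundary totally geodesic --- which is all you need, but the claim as written is false.)

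The missing idea is that one must use the Einstein equation \emph{globally}, not just its boundary asymptotics. This is what the paper does: by Lee's spectral result for asymptotically hyperbolic Einstein manifolds with conformal infinity of positive Yamabe type, as exploited by Chang--Qing--Yang, there is a special defining function $\rho$ (built from a global eigenfunction on $(\mathring X,g_+)$) for which the compactification $g=\rho^2 g_+$ has positive scalar curvature on all of $X$ and totally geodesic boundary; Chru\'sciel--Delay--Lee--Skinner supplies boundary regularity. Feeding that compactification into the Vanishing Lemma gives $I(X,h)=0$, which is the paper's one-step proof. Your write-up should replace the extension-plus-conformal-deformation step with this citation (or an equivalent global argument on $g_+$); as it stands the proposal has a genuine gap at its central point.
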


\begin{proof} Suppose $X$ admits a CCE metric $g_{+}$ with conformal infinity given by $(M,[h])$. By \cite{CQY} (based on \cite{Lee}), there is a compactification $g = \rho^2 g_{+}$ with positive scalar curvature such that the boundary $M$ is totally geodesic.  By \cite{CDLS}, we may assume that $g$ is smooth up to the boundary. If we let $h_0 = g_{| M}$ denote the induced metric on the boundary, then $g$ is an extension of $h_0 \in [h]$ with positive scalar curvature and zero mean curvature, and it follows from the Vanishing Lemma \ref{lem:vanishing} that $I(X,h) = 0$.
\end{proof}

\section{Examples of manifolds $X$ with $I(X,h)\ne 0$}\label{sec:ex}

In this section we state results concerning the $I$-invariant and show how these imply Theorem \ref{introThm}. These results will be proved in the remaining sections of the paper.

A first step is to construct \pscm s $h$ on the sphere $S^{4k-1}$ for $k\ge 2$ with  $I(D^{4k},h)\ne 0$. Gromov and Lawson proved in \cite[Theorem 4.47]{GL} that the space $\scr R^+(S^7)$ of \pscm s on $S^7$ has infinitely many connected components, and the main result of \cite{GH} shows that most of these metrics are not conformal infinities of CCE metrics on the $8$\nb-dimensional disk $D^8$. More generally, according to Theorem 2.6 of \cite{Rosenberg}, there are infinitely many connected components of $\scr R^+(S^{4k-1})$ for $k\ge 2$. The invariant used to distinguish these components is an index invariant, which (up to a factor of $2$ for $k$ odd) can be identified with the invariant $I(D^{4k},h)$. In particular, this result shows that for fixed $k\ge 2$ the integers $I(D^{4k},h)$ as $h$ ranges over the \pscm s on $S^{4k-1}$ form an {\em infinite} subset (in fact, an infinite subgroup) of $\Z$. This implies that there are infinitely many connected components of the space $\scr R^+(S^{4k-1})$, $k\ge 2$, which cannot be conformal infinities of CCE metrics on the disk $D^{4k}$. In particular, this proves Theorem \ref{introThm} for disks of dimension $4k\ge 8$.

In this paper we obtain the following precise characterization of the possible values of $I(D^{4k},h)$ for $h\in \scr R^+(S^{4k-1})$.

\begin{prop}\label{prop:exotic_pscm}
\[
\left\{\left.I(D^{4k},h)\in \Z\right| h\in \scr R^+(S^{4k-1})\right\}=
\begin{cases}
0&k=1\\
\Z&\text{$k\ge 2$, $k$ even}\\
2\Z&\text{$k\ge 2$, $k$ odd}
\end{cases}
\]
\end{prop}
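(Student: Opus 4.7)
The plan is to treat the three cases $k=1$, $k \geq 2$ even, and $k \geq 2$ odd separately. For $k=1$, the argument is immediate: by Marques' theorem \cite{Mar} the space $\scr R^+(S^3)$ is path-connected, so by Proposition \ref{prop:1} the invariant $I(D^4, \cdot)$ is constant on it. The round metric on $S^3$ extends to the round hemisphere of $S^4$ as a totally geodesic metric on $D^4$ with positive scalar curvature, so the Vanishing Lemma \ref{lem:vanishing} forces $I(D^4, h_{\text{round}}) = 0$, and hence $I(D^4, h) = 0$ for all $h \in \scr R^+(S^3)$.

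For $k \geq 2$, the key structural input is that by Proposition \ref{prop:pscm_conn_sum} the set $H_k := \{ I(D^{4k}, h) : h \in \scr R^+(S^{4k-1}) \}$ is a subgroup of $\Z$, and by Rosenberg's Theorem 2.6 \cite{Rosenberg} it is infinite. Thus $H_k = m_k \Z$ for some $m_k \geq 1$, and the proof reduces to showing $m_k = 1$ when $k$ is even and $m_k = 2$ when $k$ is odd.

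The upper bound $m_k \in 2\Z$ when $k$ is odd comes from the quaternionic structure on the spinor bundle. In dimension $4k \equiv 4 \pmod 8$, the complex spinor bundle admits a parallel $\C$-antilinear bundle automorphism $J$ with $J^2 = -\opn{Id}$, which preserves the chirality decomposition, commutes with Clifford multiplication, and hence commutes with the Dirac operator both in the interior of $X$ and on $\partial X$. In particular $J$ preserves the APS spectral projection $P_{\geq 0}$, so the $\C$-linear Fredholm operator $D^+$ with APS boundary conditions is quaternionic-linear. Its kernel and cokernel are then quaternionic $\C$-vector spaces, so each has even complex dimension and $I(D^{4k}, h) = \opn{index}(D^+) \in 2\Z$.

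For surjectivity, I would construct, for each $n$ in the target subgroup ($\Z$ for $k$ even, $2\Z$ for $k$ odd), a PSC metric $h_n$ on $S^{4k-1}$ with $I(D^{4k}, h_n) = n$. By the Atiyah-Hirzebruch integrality theorem (in the odd-$k$ case sharpened by the same quaternionic consideration) one can choose a closed spin manifold $Y^{4k}$ with $\hat A(Y) = n$. Endow $Y$ with a metric that is a round ball near a chosen point, let $Y_0$ be the complement of a slightly smaller ball, and note that $\partial Y_0 = S^{4k-1}$ inherits the round metric totally geodesically. Applying the Gluing Lemma to $Y = Y_0 \cup_{S^{4k-1}} D^{4k}$ together with $I(D^{4k}, h_{\text{round}}) = 0$ yields $I(Y_0, h_{\text{round}}) = n$. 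The main obstacle is then a Gromov-Lawson-type surgery construction: by successively trading handles of the interior of $Y_0$ for codimension-$\geq 3$ surgeries whose traces admit PSC totally geodesic metrics, one reaches a manifold diffeomorphic to $D^{4k}$ with boundary metric $h_n \in \scr R^+(S^{4k-1})$. Each surgery trace contributes $I = 0$ by the Vanishing Lemma, so iterated application of the Gluing Lemma produces $I(D^{4k}, h_n) = n$. The genuine technical work --- performing each surgery so as to preserve the totally geodesic boundary condition throughout --- is what must be developed in Section \ref{sec:exotic_pscm}.
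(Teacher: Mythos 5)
Your treatment of the $k=1$ case and of the containment $I(D^{4k},h)\in 2\Z$ for odd $k$ (via the quaternionic structure on the spinor bundle) is correct and coincides with the paper's argument. The gap is in the surjectivity step, which is the heart of the proposition and which you explicitly defer. As literally written, the step ``by successively trading handles of the interior of $Y_0$ for codimension-$\ge 3$ surgeries \dots one reaches a manifold diffeomorphic to $D^{4k}$'' cannot be carried out: surgeries in the interior do not change the spin cobordism class, so $Y_0=Y\setminus \mathring D^{4k}$ can never be converted into $D^{4k}$ when $\wh A(Y)=n\ne 0$. If instead you mean peeling $Y_0$ down to its innermost $0$-handle along a handle decomposition, then your bookkeeping cannot start from the round metric on $\p Y_0$: the Gromov--Lawson/Gajer extension technique lets you extend a metric over a handle of codimension $\ge 3$ but does not let you prescribe the outgoing boundary metric, and indeed the metric $h_n$ you end up with must lie in a different path component of $\scr R^+(S^{4k-1})$ from $h_{\mathrm{round}}$ whenever $n\ne 0$, so the identity $I(Y_0,h_{\mathrm{round}})=n$ is the wrong starting point. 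The paper's route avoids all of this: first modify $Y$ by surgeries of index $0,1,2$ to make it $2$-connected (this preserves $\wh A$ and uses the spin condition and $\dim\ge 6$); then $(Y_0,\p Y_0)$ is $2$-connected, so Gajer's theorem \cite{Ga} produces a positive scalar curvature metric on all of $Y_0$ that is a product near the boundary (Proposition \ref{prop:pscm}); finally one application of the Vanishing Lemma gives $I(Y_0,h_n)=0$, and the Gluing Lemma applied to $Y=Y_0\cup_{S^{4k-1}}D^{4k}$ along $h_n$ (not along $h_{\mathrm{round}}$) gives $I(D^{4k},h_n)=\wh A(Y)=n$ (Lemma \ref{lem:Index}). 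Your proposal omits the $2$-connectivity reduction entirely, and without it a handle decomposition with only codimension $\ge 3$ handles need not exist, so the ``Gromov--Lawson-type'' extension you invoke is not available.

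Two smaller points. Realizing every integer (every even integer for odd $k$) as $\wh A(Y)$ is not a consequence of the Atiyah--Hirzebruch integrality theorem, which only constrains the possible values; one needs explicit manifolds, as in Lemma \ref{lem:Ahat} (the Kummer surface with $\wh A=2$ and Joyce's $Spin(7)$ $8$-manifolds with $\wh A=1$, together with products, disjoint unions and orientation reversal). Also, the ``infinite subgroup plus Rosenberg'' framing buys nothing: Proposition \ref{prop:pscm_conn_sum} only gives closure under addition (inverses require an extra argument, e.g.\ doubling the disk), and to conclude $m_k=1$ resp.\ $m_k=2$ you must still exhibit metrics realizing $1$ resp.\ $2$, which is exactly the construction above.
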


It is known that the space $\scr R^+(S^3)$ of \pscm s on $S^3$ is path connected \cite{Mar}. It follows that $I(D^4,h)=I(D^4,h_0)$, where $h_0$ is the round metric on $S^3$. Moreover, $I(D^4,h_0)=0$ by Lemma \ref{lem:vanishing}, since $h_0$ extends to a totally geodesic \pscm\ on $D^4$. This proves the proposition for $k=1$. The proof for $k\ge 2$ is the subject of section \ref{sec:exotic_pscm}.

To prove Theorem \ref{introThm} for general compact spin manifolds $X$ of dimension $4k\ge 8$, we need to construct infinitely many \pscm s $h$ on $M=\p X$ in different connected components of the space $\scr R^+(M)$ of \pscm s on $M$, and show that $I(X,h)\ne 0$. To do this, we use the connected sum construction for \pscm s.
Let $M_1$, $M_2$ be closed manifolds of dimension $n$, and let $M_1\# M_2$ be their connected sum (given by removing an open $n$\nb-disk from both manifolds, and gluing the resulting manifolds  along their boundary $S^{n-1}$).
If $h_1$, $h_2$ are \pscm s on $M_1$ resp.\ $M_2$, then a \pscm\ $h_1\# h_2$ can be constructed on $M_1\# M_2$, provided $n\ge 3$
(see Definition \ref{def:conn_sum_pscm}).

If $M_2$ is a sphere, then the connected sum $M_1\# M_2$ is diffeomorphic to $M_1$, and hence $h_1\# h_2$ can be interpreted as a new \pscm\ on $M_1$.  Using this observation, we will prove

\begin{prop}\label{prop:pscm_conn_sum} Let $h_1$ be a \pscm\ on the boundary $M=\p X$ of a compact spin manifold $X$ of dimension $4k\ge 4$, and let $h_2\in \scr R^+(S^{4k-1})$. Then
\[
I(X,h_1\# h_2) =I(X,h_1)+I(D^{4k},h_2).
\]
\end{prop}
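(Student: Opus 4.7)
The strategy is to realize the left-hand side as the invariant of a boundary connected sum and apply the Gluing Lemma. The key topological observation is that the boundary connected sum $Y := X \natural D^{4k}$ is diffeomorphic to $X$ itself (boundary-connect-summing with a disk is a trivial operation on the manifold), while its boundary satisfies $\partial Y = M \# S^{4k-1} \cong M$. Under this identification, the metric $h_1 \# h_2$ on $M$ is exactly the connected-sum metric of $h_1$ on $\partial X$ and $h_2$ on $\partial D^{4k}$.

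First I would construct a totally geodesic extension of $h_1 \# h_2$ on $Y$ adapted to this decomposition. Starting with totally geodesic extensions $g_1$ of $h_1$ on $X$ and $g_2$ of $h_2$ on $D^{4k}$, I would implement the boundary connected sum metrically: remove small half-balls from $X$ and $D^{4k}$ meeting the boundary in small metric disks, and glue the resulting notched manifolds along the hemispherical part of the removed half-balls, inserting a standard Gromov\nb-Lawson neck. The construction should be arranged so that (i) the boundary of $Y$ remains totally geodesic, (ii) the induced boundary metric coincides with $h_1 \# h_2$, and (iii) the metric across the internal gluing hypersurface (a $(4k-1)$\nb-disk inside $Y$) has the product structure required to invoke Lemma \ref{lem:gluing}.

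With such a metric in place, an application of the Gluing Lemma to the decomposition $Y = X \cup D^{4k}$ would yield
\[
I(Y,\, h_1 \# h_2) \;=\; I(X, h_1) + I(D^{4k}, h_2).
\]
The diffeomorphism $Y \cong X$, together with Proposition \ref{prop:1} (which ensures $I$ depends only on the spin diffeomorphism type and the path component of the boundary metric in $\mathscr{Y}^+$), identifies the left-hand side with $I(X, h_1 \# h_2)$, giving the claim.

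The main obstacle is the metric construction in the second step: one must simultaneously realize the connected-sum metric on the boundary, preserve the totally geodesic condition, and produce the product-like structure near the internal gluing hypersurface required by the Gluing Lemma. The delicate point is the ``corner'' where the connecting disk on $\partial Y$ meets the interior gluing hypersurface, but the relevant surgery techniques (Gromov\nb-Lawson constructions adapted to totally geodesic boundaries) are by now standard, and the bulk of the argument should reduce to verifying that these can be arranged to satisfy the hypotheses of Lemma \ref{lem:gluing}.
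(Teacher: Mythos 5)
There is a genuine gap, and it lies exactly at the point you flag as ``delicate'' and then set aside. Your decomposition of $Y=X\natural D^{4k}$ glues the two pieces along a $(4k-1)$\nb-dimensional \emph{disk} whose boundary $(4k-2)$\nb-sphere sits inside $\p Y$. That is a decomposition along a hypersurface \emph{with boundary}, so the two pieces are manifolds with corners. The Gluing Lemma \ref{lem:gluing} does not cover this situation: both its statement and its proof (which runs through the APS formula \eqref{eq:APS2} and the $\eta$\nb-invariants of the \emph{closed} boundary components) require the separating hypersurface $M$ to be a closed manifold lying away from $\p X$, and the invariant $I$ is only defined for compact spin manifolds whose boundary is closed and carries a metric of positive Yamabe invariant. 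Moreover, even formally, the pieces in your decomposition are not $(X,h_1)$ and $(D^{4k},h_2)$: after removing a half-ball, the boundary of the notched copy of $X$ is $(M\setminus \mathring D^{4k-1})\cup(\text{hemisphere})$ with a metric that is not $h_1$, so the identity $I(Y,h_1\# h_2)=I(X,h_1)+I(D^{4k},h_2)$ cannot be read off from the Gluing Lemma applied to that splitting. No amount of care with Gromov--Lawson necks fixes this, because the problem is with the hypotheses of the lemma, not with the existence of a suitable metric. There is also a secondary issue: in this paper $h_1\# h_2$ is \emph{defined} (Definition \ref{def:conn_sum_pscm}) as the restriction to $M\# S^{4k-1}$ of a \pscm\ on the trace $W$ of the $0$\nb-surgery which is a product near $\p W$; if you build the boundary metric by an ad hoc neck construction you still owe an argument that it is of this form (or an argument handling the resulting ambiguity), since the component of $h_1\# h_2$ in $\scr R^+(M\# S^{4k-1})$ is not claimed to be well defined.

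The paper's route (Proposition \ref{prop:add}) avoids corners altogether: it decomposes $X\#_{\p}D^{4k}$ into three pieces $X'\amalg (D^{4k})'$ and the collar piece $W=([-1,0]\times M)\#_{\p}([-1,0]\times S^{4k-1})$, glued along the \emph{closed} hypersurface $M'\amalg S'$, so the Gluing Lemma applies verbatim. Gajer's theorem supplies a \pscm\ on $W$, product near the boundary, extending $h_1\amalg h_2$; its restriction to $M\# S^{4k-1}$ is by definition $h_1\# h_2$, and the Vanishing Lemma \ref{lem:vanishing} gives $I(W,h_1\amalg h_2\amalg(h_1\# h_2))=0$, which is what replaces your corner analysis. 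Your overall outline (identify $X\natural D^{4k}\cong X$, use Proposition \ref{prop:1}, reduce to a gluing computation) is the right frame, but to complete it you should insert the cylinder piece $W$ so that all gluings happen along closed hypersurfaces, rather than attempting a corner version of Lemma \ref{lem:gluing} that the paper neither states nor proves.
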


In particular, the previous Proposition then implies that for $4k\ge 8$ there are infinitely many components of $\scr R^+(M)$ such that for any metric $h$ in that component $I(X,h)\ne 0$, thus proving Theorem \ref{introThm} of the introduction. The above proposition will be proved in section \ref{sec:gluing}.

\smallskip

If $X$ is a $4$\nb-dimensional compact spin manifold with boundary $M$,  $h_1\in \scr R^+(M)$, and $h_2\in \scr R^+(S^3)$, then according to the two propositions above,
\[
I(X,h_1\# h_2)=I(X,h_1)+I(D^4,h_2)=I(X,h_1).
\]
In other words, unlike for $k\ge 2$, we cannot produce examples of \pscm s $h$ with $I(X,h)\ne 0$ by replacing $h$ by $h\# h_2$ for suitable $h_2\in \scr R^+(S^3)$.  Instead, to obtain $4$\nb-dimensional examples with non-trivial $I$\nb-invariant, we will replace $X$ by the connected sum $X\# Y$ of $X$ with  a closed spin manifold $Y$.

\begin{prop}\label{prop:conn_sum}
Let $h$ be a \pscm\ on the boundary $M=\p X$ of a compact spin manifold $X$ of dimension $4k \geq 4$, and let $Y$ be a closed spin manifold of dimension $4k$. Then
\[
I(X\#Y,h)=I(X,h)+\wh A(Y),
\]
where $X\# Y$ is the connected sum of $X$ and $Y$ (this is again a manifold with boundary $M$).
\end{prop}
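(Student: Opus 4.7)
The plan is to apply the Gluing Lemma (Lemma \ref{lem:gluing}) three times, using decompositions of $X$, $Y$, and $X\#Y$ along embedded copies of $S^{4k-1}$ equipped with a common round metric, and then combine the resulting identities algebraically.

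First, I would choose a small open disk in the interior of $X$ and set $X':=X\setminus \opn{int}(D^{4k})$, so that $\p X'=M\sqcup S^{4k-1}$ and $X\cong X'\cup_{S^{4k-1}}D^{4k}$. Similarly, remove a small open disk from $Y$ to obtain $Y'$ with $\p Y'=S^{4k-1}$; then $Y\cong Y'\cup_{S^{4k-1}}D^{4k}$, and by construction of the connected sum $X\# Y\cong X'\cup_{S^{4k-1}}Y'$. Fix a round metric $h_0$ on $S^{4k-1}$ (of the radius that makes it extend to a totally geodesic \pscm\ on $D^{4k}$); by the Vanishing Lemma \ref{lem:vanishing} this gives $I(D^{4k},h_0)=0$.

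Applying the Gluing Lemma to each of the three decompositions above yields
\begin{align*}
I(X,h) &= I(X',h\sqcup h_0) + I(D^{4k},h_0) = I(X',h\sqcup h_0),\\
\wh A(Y)\;=\;I(Y) &= I(Y',h_0) + I(D^{4k},h_0) = I(Y',h_0),\\
I(X\# Y,h) &= I(X',h\sqcup h_0) + I(Y',h_0).
\end{align*}
Substituting the first two equations into the third produces the desired identity $I(X\# Y,h)=I(X,h)+\wh A(Y)$.

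The main obstacle is verifying that Lemma \ref{lem:gluing} can be applied uniformly across all three situations — in particular, the middle one, where $Y$ is closed and we glue two manifolds-with-boundary along $S^{4k-1}$ into a manifold without boundary. Spin-structure compatibility is automatic, since the unique spin structure on $S^{4k-1}$ bounding $D^{4k}$ agrees with the restrictions of the spin structures on $X$ and $Y$ by the very definition of the connected sum. Provided the Gluing Lemma is stated in sufficient generality (with hypotheses on $h_0$ met by the round metric, which lies in $\scr Y^+(S^{4k-1})$), the computation above is forced; no further analytic input is needed.
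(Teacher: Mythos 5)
Your proposal is correct and follows essentially the same route as the paper: remove a disk from each of $X$ and $Y$, apply the Gluing Lemma to the three decompositions of $X$, $Y$, and $X\# Y$ along $S^{4k-1}$ with the round metric $h_0$, and use the hemisphere extension plus the Vanishing Lemma to get $I(D^{4k},h_0)=0$. The case of gluing into a closed manifold is explicitly covered by the Gluing Lemma (the boundary pieces $M_\pm$ are allowed to be empty), so no extra argument is needed there.
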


This proposition will be proved in section \ref{sec:gluing}.  Note that Theorem \ref{ThmB} is an immediate consequence of this formula.

\smallskip

Using the preceding results we can give examples of spin $4$-manifolds with $I(X,h)\ne 0$.

\begin{ex} If $h$ is a metric with positive Yamabe invariant on $S^3 = \partial D^4$, then for any closed spin $4$\nb-manifold $Y$
\[
I(D^4\# Y,h)=I(D^4,h)+\wh A(Y)=\wh A(Y).
\]
This proves that for any $Y$ with $\wh A(Y)\ne 0$ (for example the Kummer surface, see Equation \eqref{eq:Kummer}), $(D^4\# Y, h) = (Y\setminus\mathring D^{4k},h)$ is not fillable.
\end{ex}

\begin{ex} The same argument applies to \pscm s $h$ on other $3$\nb-manifolds. For example, let $M=S^1\times S^2$, and let $h$ be the standard product metric on $M$. Then $I(D^2\times S^2,h)=0$, since $h$ extends to the  product metric $g$ on $D^2\times S^2$ (which is the hemisphere metric on the factor $D^2$ and the round metric on the factor $S^2$) which has positive scalar curvature, and for which the boundary is totally geodesic. Hence for a closed spin manifold $Y$ of dimension $4$, $I((D^2\times S^2)\# Y,h)=\wh A(Y)$. This shows that
$((D^2\times S^2)\# Y,h)$ is not fillable, for any $Y$ with $\wh A(Y)\ne 0$.  However, as explained in the Introduction, the AdS Schwarzschild example of Hawking-Page is a Poincar\'e-Einstein metric on $D^2 \times S^2$ with conformal infinity given by a product metric on $S^1 \times S^2$.
\end{ex}

The rest of the paper is devoted to the proof of the three propositions above. In the next section we prove a gluing lemma for the $I$\nb-invariant and show that it implies Propositions \ref{prop:pscm_conn_sum} and \ref{prop:conn_sum}.

 \section{A gluing formula for $I(X,h)$}  \label{sec:gluing}

Let $X$ be a compact spin manifold of dimension $4k$  with boundary $\p X=M_-\amalg M_+$, i.e., the boundary of $X$ is the disjoint union of closed manifolds $M_-$ and $M_+$. The manifolds $M_\pm$ are not required to be connected, and they are allowed to be empty. Suppose that $X$ decomposes in the form $X=X_-\cup_M X_+$, where $M$ is a hypersurface of $X$, as indicated in the picture below.

\medskip
\begin{center}
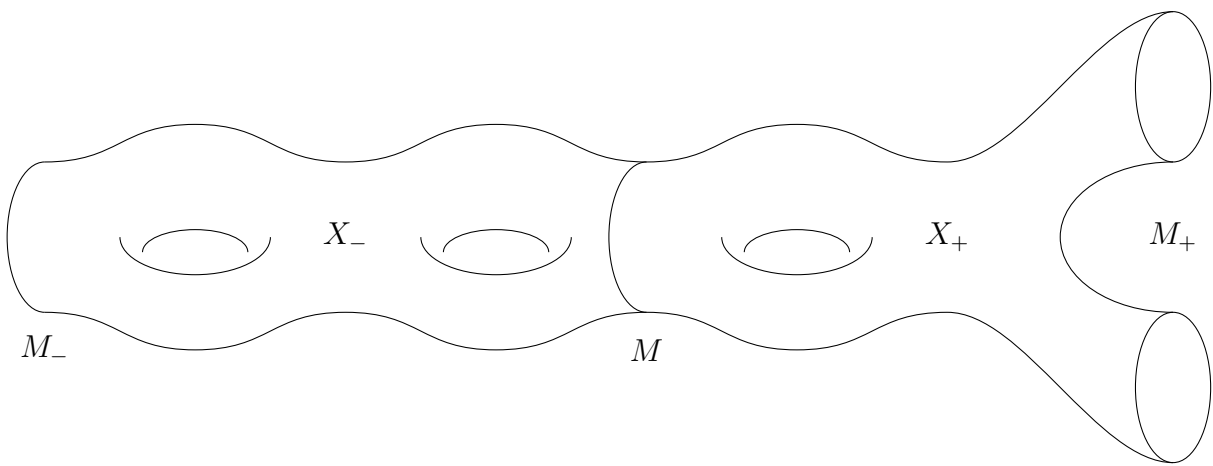

\begin{tikzpicture}
\draw (0,1) arc (90:270:.5 and 1);
\draw (0,1).. controls (1,1) and (1,1.5) ..
(2,1.5) .. controls (3,1.5) and (3,1) ..
(4,1).. controls (5,1) and (5,1.5) ..
(6,1.5) .. controls (7,1.5) and (7,1) ..
(8,1).. controls (9,1) and (9,1.5) ..
(10,1.5).. controls (11,1.5) and (11,1) ..
(12,1).. controls (13,1) and (14,3) ..
(15,3);
\draw (0,-1) .. controls (1,-1) and (1,-1.5) ..
(2,-1.5) .. controls (3,-1.5) and (3,-1) ..
(4,-1).. controls (5,-1) and (5,-1.5) ..
(6,-1.5) .. controls (7,-1.5) and (7,-1) ..
(8,-1).. controls (9,-1) and (9,-1.5) ..
(10,-1.5).. controls (11,-1.5) and (11,-1) ..
(12,-1).. controls (13,-1) and (14,-3) ..
(15,-3);
\draw (15,1) arc (90:270:1.5 and 1);
\draw (8,1) arc (90:270:.5 and 1);
\draw (15,-2) circle (.5 and 1);
\draw (15,2) circle (.5 and 1);
\draw (1,0) arc (180:360:1 and .5);
\draw (1.3,-.2) arc (180:0:.7 and .3);
\draw (5,0) arc (180:360:1 and .5);
\draw (5.3,-.2) arc (180:0:.7 and .3);
\draw (9,0) arc (180:360:1 and .5);
\draw (9.3,-.2) arc (180:0:.7 and .3);
\node at (4,0){$X_-$};
\node at (12,0){$X_+$};
\node at (8,-1.5){$M$};
\node at (0,-1.5){$M_-$};
\node at (15,0){$M_+$};
\end{tikzpicture}
\captionof{figure}{The decomposed manifold $X=X_-\cup_M X_+$}
\end{center}

\begin{lem}\label{lem:gluing}{\bf (Gluing Lemma)}
Let $X$ be a compact spin manifold of dimension $4k\ge 4$ with boundary $\p X=M_-\amalg M_+$ with a decomposition $X=X_-\cup_M X_+$ as described above. Let $h_-,h,h_+$ be \pscm s on $M_-$ resp.\ $M$ resp.\ $M_+$. Then
\[
I(X,h_-\amalg h_+)=I(X_-,h_-\amalg h)+I(X_+,h\amalg h_+).
\]
In particular, if $\p X=\emptyset$, then
\[
\wh A(X)=I(M_-,h)+I(M_+,h).
\]
\end{lem}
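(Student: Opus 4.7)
My plan is to apply the Atiyah--Patodi--Singer formula \eqref{eq:APS2} separately to $(X,g)$, $(X_-,g|_{X_-})$, and $(X_+,g|_{X_+})$ for a single compatible metric $g$, and observe that the $\eta$\nb-contributions from the internal hypersurface $M$ cancel while the $\wh A$\nb-integrals split additively.

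First I would construct a good common metric. Choose any totally geodesic extensions $g_-$ of $h_-\amalg h$ on $X_-$ and $g_+$ of $h\amalg h_+$ on $X_+$. By putting each of $g_\pm$ into Gaussian normal form in a collar of $M$ and truncating the transverse Taylor expansion past the zeroth order term, I can deform $g_\pm$ to a product $h\times dt^2$ in a collar of $M$ without changing the induced boundary metric and keeping the other boundary components totally geodesic. These deformations are paths of totally geodesic extensions of a fixed boundary metric, so by Proposition \ref{prop:1} they do not change $I(X_\pm,\,\cdot\,)$, and the resulting $g_-$ and $g_+$ can be glued along $M$ to give a single smooth metric $g$ on $X$ for which $M_-$, $M_+$, and $M$ are simultaneously totally geodesic. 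By the same Proposition, $g$ computes $I(X,h_-\amalg h_+)$.

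Next I would apply \eqref{eq:APS2} three times. Orient $M$ as a boundary component of $X_-$; then as a component of $\p X_+$ it carries the reverse orientation $\bar M$, and the standard sign property of the $\eta$\nb-invariant under orientation reversal gives $\eta(D(\bar M,h))=-\eta(D(M,h))$ (reversing the spin orientation conjugates the boundary Dirac operator to its negative). Therefore
\begin{align*}
I(X_-,h_-\amalg h) &= \int_{X_-}\wh A(p)\;-\;\tfrac12\eta(D(M_-,h_-))\;-\;\tfrac12\eta(D(M,h)),\\
I(X_+,h\amalg h_+) &= \int_{X_+}\wh A(p)\;+\;\tfrac12\eta(D(M,h))\;-\;\tfrac12\eta(D(M_+,h_+)).
\end{align*}
Adding, the $\eta(D(M,h))$ terms cancel and $\int_{X_-}+\int_{X_+}=\int_X$, yielding exactly the right-hand side of \eqref{eq:APS2} applied to $(X,g)$, which is $I(X,h_-\amalg h_+)$. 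For the ``in particular'' case $\p X=\emptyset$ the boundary $\eta$-terms are absent and $I(X)=\wh A(X)$ by the definition of $I$ on a closed manifold, so the identity reduces to $\wh A(X)=I(X_-,h)+I(X_+,h)$.

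The main obstacle is the geometric first step: arranging a single smooth totally geodesic extension on $X$ that simultaneously makes $M$ totally geodesic and whose restrictions still represent $I(X_\pm,\,\cdot\,)$. Everything else is bookkeeping with the APS formula once one has the cancellation $\eta(\bar M,h)=-\eta(M,h)$, which is a standard fact about Dirac-type $\eta$-invariants. Note also that positivity of the Yamabe invariants of $h_-$, $h$, and $h_+$ is used through Lemma \ref{lem:index} to suppress the harmonic spinor correction terms in all three instances of the formula.
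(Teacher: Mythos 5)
Your proposal is correct and follows essentially the same route as the paper: choose a metric on $X$ that is a product near $M$, $M_-$, $M_+$ (so Lemma \ref{lem:index} applies to all three pieces), apply the APS formula \eqref{eq:APS2} to $X$, $X_-$, $X_+$, and use the sign change of the $\eta$\nb-invariant under orientation reversal of $M$ to cancel the interior contributions while the $\wh A$\nb-integrals add. Your extra care in deforming totally geodesic extensions to product form and invoking Proposition \ref{prop:1} is just a slightly more explicit version of the metric choice the paper makes at the start of its proof.
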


\begin{proof}
To prove this statement we will compute the index invariant $I$ for the three manifolds $X$, $X_+$ and $X_-$ using Lemma \ref{lem:index} for these manifolds with boundary. This requires that the metric is a product metric near the boundary. So in the first step we pick a Riemannian metric $g$ on $X$ which
\begin{itemize}
\item restricts to $h$ on $M\subset X$ and to $h_\pm$ on $M_\pm$, and
\item is a product metric near these codimension $1$ submanifolds. In other words, a neighborhood $U$ of $X$ is isometric to $(M,h)\times (-1,+1)$ in such a way that $U\cap X_-=M\times (-1,0]$, and $U\cap X_+=M\times [0,+1)$, and a neighborhood of $M_\pm$ is isometric to $(M_\pm ,h_\pm)\times [0,1)$.
\end{itemize}
Then by Lemma \ref{lem:index}
\begin{equation}\label{eq:APS}
\opn{index}(D^+(X,g))
=\int_{X}\wh A(p) - \frac12\eta(D(\p X,g_{|\p X})).
\end{equation}

The boundary $\p X$ is the disjoint union of $M_-$ and $M_+$, but care is needed with orientations, since changing the orientation of a manifold $M$ multiplies the eta invariant of its Dirac operator by $-1$ (as discussed in \cite{APS} just before Theorem 4.2). In particular, in  formula \eqref{eq:APS} the orientation of the boundary $\p X$ is the orientation  {\em induced by the orientation on $X$} and its $\eta$\nb-invariant $\eta(D(\p X,g_{|\p X}),0)$ has to be understood accordingly. This is not so relevant for $M_\pm$, but becomes crucial when we consider $M$, since the orientation of $M$ considered as boundary of $X_+$ is the {\em opposite} of the orientation it has as boundary of $X_-$.

Let us adopt the following orientation conditions on $M, M_\pm$: the orientation of $M_+$ (resp.\ $M$) is induced by the orientation of $X$ (resp.\ $X_-$), while the orientation of $M_-$ is the {\em opposite} of the orientation induced by $X$. In other words, as oriented manifolds, $\p X$ is the disjoint union of $M_+$ and $\bar M_-$, the notation for the  manifold $M_-$ equipped with the opposite orientation. We have
\[
\p X=\bar M_-\amalg M_+
\qquad
\p X_-=\bar M_-\amalg M
\qquad
\p X_+=\bar M\amalg M_+.
\]
This implies
\begin{align*}
\eta(D(\p X,g_{\p X}))&=\eta(D(\bar M_-,h_-))+\eta(D(M_+,h_+))\\
&=-\eta(D(M_-,h_-))+\eta(D(M_+,h_+)).
\end{align*}
It follows that
\begin{align*}
I(X,h_-\amalg h_+)=
&\index(D^+(X,g))\\
=&\int_{X}\wh A(p) - \frac12\eta(D(\p X,g_{|\p X}))\\
=&\int_{X}\wh A(p) - \frac12(-\eta(D(M_-,h_-))+\eta(D(M_+,h_+))).
\end{align*}
Similarly
\[
 I(X_- ,h_-\amalg h)
= \int_{X_-}\wh A(p) - \frac12(-\eta(D(M_-,h_-))+\eta(D(M,h)))
\]
and
\[
 I(X_+ ,h\amalg h_+)
 =\int_{X_+}\wh A(p) - \frac12(-\eta(D(M,h))+\eta(D(M_+,h_+))).
\]
Adding these two equations, the two integrals combine, and the terms $\eta(D(M,h))$ cancel, leaving us with
\begin{align*}
&I(X_- ,h_-\amalg h)+I(X_+ ,h\amalg h_+)\\
=&\int_X\wh A(p) - \frac12(-\eta(D(M_-,h_-))+\eta(D(M_+,h_+)))\\
=&I(X,h_-\amalg h_+)
\end{align*}
as claimed.
\end{proof}

\begin{proof}[Proof of Proposition \ref{prop:conn_sum}]
The connected sum $X\# Y$ is formed by removing an open disk $\mathring D^{4k}$ from $X$ resp.\ $Y$, resulting in manifolds
\[
X\setminus \mathring D^{4k}\quad
\text{with boundary $\p X\amalg S^{4k-1}$}
\qquad \text{and}\qquad
Y\setminus \mathring D^{4k}
\quad
\text{with boundary $S^{4k-1}$},
\]
and then gluing these along the common boundary component $S^{4k-1}$. Here is a picture.

\medskip

\begin{center}
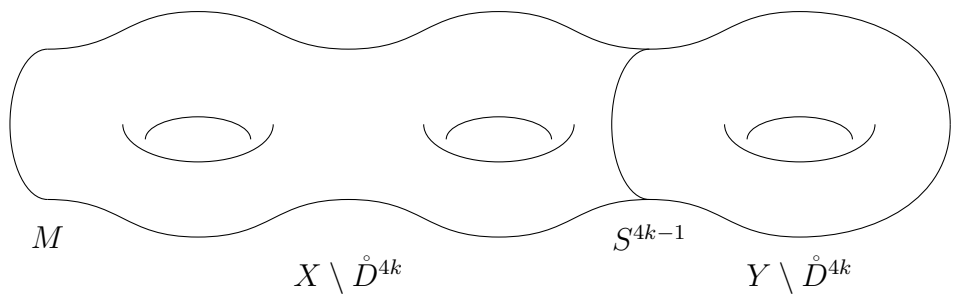

\begin{tikzpicture}
\draw (0,1) arc (90:270:.5 and 1);
\draw (0,1).. controls (1,1) and (1,1.5) ..
(2,1.5) .. controls (3,1.5) and (3,1) ..
(4,1).. controls (5,1) and (5,1.5) ..
(6,1.5) .. controls (7,1.5) and (7,1) ..
(8,1).. controls (9,1) and (9,1.5) ..
(10,1.5).. controls (11,1.5) and (12,1) ..
(12,0);
\draw (0,-1) .. controls (1,-1) and (1,-1.5) ..
(2,-1.5) .. controls (3,-1.5) and (3,-1) ..
(4,-1).. controls (5,-1) and (5,-1.5) ..
(6,-1.5) .. controls (7,-1.5) and (7,-1) ..
(8,-1).. controls (9,-1) and (9,-1.5) ..
(10,-1.5).. controls (11,-1.5) and (12,-1) ..
(12,0);
\draw (8,1) arc (90:270:.5 and 1);
\draw (1,0) arc (180:360:1 and .5);
\draw (1.3,-.2) arc (180:0:.7 and .3);
\draw (5,0) arc (180:360:1 and .5);
\draw (5.3,-.2) arc (180:0:.7 and .3);
\draw (9,0) arc (180:360:1 and .5);
\draw (9.3,-.2) arc (180:0:.7 and .3);
\node at (4,-2){$X\setminus \mathring D^{4k}$};
\node at (10,-2){$Y\setminus \mathring D^{4k}$};
\node at (8,-1.5){$S^{4k-1}$};
\node at (0,-1.5){$M$};
\end{tikzpicture}
\captionof{figure}{The connected sum $X\# Y$}
\end{center}

\medskip

Let $h_0$ be the round metric on $S^{4k-1}$. Then applying Gluing Lemma \ref{lem:gluing} to the manifold $X\# Y$ we obtain
\begin{equation}\label{eq:conn_sum}
I(X\# Y,h)=I(X\setminus \mathring D^{4k},h\amalg h_0)
+I(Y\setminus \mathring D^{4k},h_0).
\end{equation}
Next, applying the Gluing Lemma to the decomposed manifold $X=(X\setminus \mathring D^{4k})\cup_{S^{4k-1}}D^{4k}$ gives
\[
I(X,h)=I(X\setminus \mathring D^{4k},h\amalg h_0)+I(D^{4k},h_0).
\]
The round metric on $D^{4k}$, viewed as a hemisphere of $S^{4k}$, then is a \pscm\ which restricts to $h_0$ on the boundary. This implies $I(D^{4k},h_0)=0$ by Proposition \ref{lem:vanishing}, and hence $I(X\setminus \mathring D^{4k},h\amalg h_0)=I(X,h)$. Similarly, we see that $
I(Y\setminus \mathring D^{4k},h_0)=\wh A(Y)$. Combining this with equation \eqref{eq:conn_sum}, we conclude $I(X\# Y,h)=I(X,h)+\wh A(Y)$ as claimed.
\end{proof}

Proposition \ref{prop:pscm_conn_sum} is a consequence of a more general statement that we prove next.
To state it, we need the following construction.

\begin{defn}\label{def:b_conn_sum}
Let $X_1$, $X_2$ be manifolds of dimension $n+1$ with non-empty boundary $M_1=\p X_1$, $M_2=\p X_2$. The {\em boundary connected sum} $X_1\#_{\p} X_2$ is the manifold of dimension $n+1$ constructed as follows.  Let $f_i\colon D^n\into M_i$ be embeddings of the closed $n$\nb-disk into $M_i$. Then  $X_1\#_{\p} X_2$ is the quotient space of the disjoint union of $X_1$, $X_2$ and the product $D^1\times D^n$ obtained by identifying $(-1,x)\in D^1\times D^n$ with $f_1(x)\in \p X_1$ and  $(1,x)\in D^1\times D^n$ with $f_2(x)\in \p X_2$.
This gluing results in corner points along $\p D^1\times \p D^n$ which can be smoothed to give the boundary connected sum $X_1\#_{\p} X_2$ the structure of a smooth manifold with boundary. We note that the boundary of $X_1\#_{\p} X_2$ is diffeomorphic to the connected sum  $M_1\# M_2$.
\end{defn}

The boundary connected sum of two manifolds of  dimension $2$ is illustrated in figure \ref{fig:b_conn_sum}.

\medskip

\begin{center}
\begin{tikzpicture}[scale=.8]
\draw (0,0).. controls (0,1) and (1,1.5) ..
(2,1.5) .. controls (3,1.5) and (3,1) ..
(4,1).. controls (5,1) and (5,1.5) ..
(6,1.5) .. controls (7,1.5) and (7,1) ..
(8,1)--(9,1);
\draw (0,0).. controls (0,-1) and (1,-1.5) ..
(2,-1.5) .. controls (3,-1.5) and (3,-1) ..
(4,-1).. controls (5,-1) and (5,-1.5) ..
(6,-1.5) .. controls (7,-1.5) and (7,-1) ..
(8,-1)--(9,-1);
\draw (1,0) arc (180:360:1 and .5);
\draw (1.3,-.2) arc (180:0:.7 and .3);
\draw (5,0) arc (180:360:1 and .5);
\draw (5.3,-.2) arc (180:0:.7 and .3);
\draw (9,0) circle (.5 and 1);
\node at (6,.8){$X_1$};
\begin{scope}[shift={(4,-5)}]
\draw (0,0).. controls (0,1) and (1,1.5) ..
(2,1.5) .. controls (3,1.5) and (3,1) ..
(4,1)--(5,1);
\draw (0,0).. controls (0,-1) and (1,-1.5) ..
(2,-1.5) .. controls (3,-1.5) and (3,-1) ..
(4,-1)--(5,-1);
\draw (1,0) arc (180:360:1 and .5);
\draw (1.3,-.2) arc (180:0:.7 and .3);
\draw (5,0) circle (.5 and 1);
\node at (2,.8){$X_2$};
\end{scope}
\draw ($(9,0)+(30:.5 and 1)$) arc (90:-90:3);
\draw ($(9,0)+(-30:.5 and 1)$) arc (90:-90:2);
\filldraw[fill=black!10!white]($(9,0)+(30:.5 and 1)$)arc (90:-90:3) arc (-30:30:.5 and 1) arc (-90:90:2) arc (-30:30:.5 and 1);
\draw[line width=3pt]($(9,0)+(30:.5 and 1)$) arc (30:-30:.5 and 1);
\draw[line width=3pt]($(9,-5)+(30:.5 and 1)$) arc (30:-30:.5 and 1);
\node at (14,-2.5){$D^1\times D^n$};
\draw[very thick]($(9,0)+(30:.5 and 1)$) arc (30:330:.5 and 1) arc (90:-90:2)
arc (30:330:.5 and 1) arc (-90:90:3);
\node at (9,1.5){$M_1$};
\node at (9,-3.5){$M_2$};
\end{tikzpicture}

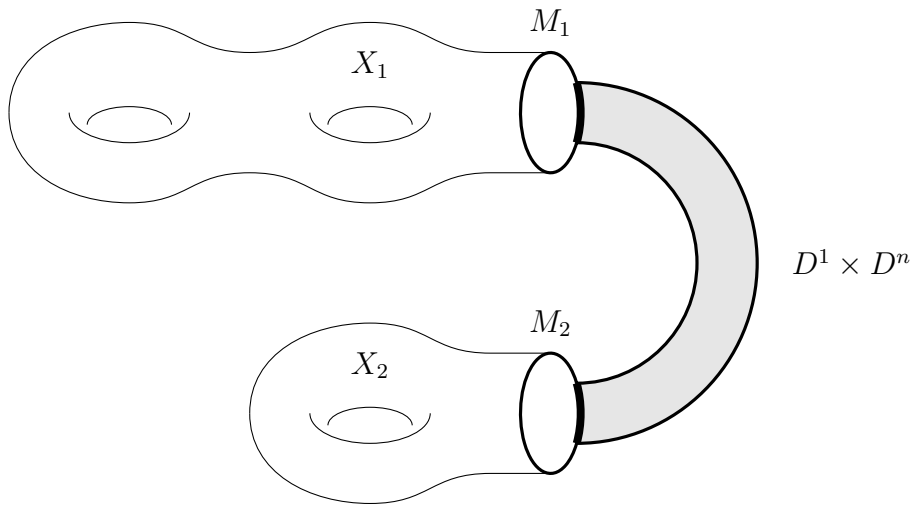
\captionof{figure}{The boundary connected sum $X_1\#_{\p} X_2$}\label{fig:b_conn_sum}
\end{center}

\medskip

The shaded region $D^1\times D^n$ is glued with $X_1$ resp.\ $X_2$ along $\p D^1\times D^n=\{\pm 1\}\times D^n$, which is indicated in the picture by the two thick line segments.  The boundary of $X_1\#_{\p} X_2$, which is diffeomorphic to the connected sum $M_1\# M_2$, is indicated by the medium thick lines.

The connected sum $M_1\# M_2$ is obtained from the disjoint union $M:=(M_1\amalg M_2)$ by removing $S^0\times \mathring D^n=\mathring  D^n\amalg \mathring D^n$ from $M$, and gluing the resulting manifold $M\setminus (S^0\times \mathring D^n)$ along its boundary $S^0\times S^{n-1}$ with the manifold $D^1\times S^{n-1}$ (which has the same boundary). This process is called a {\em $0$\nb-surgery}. More generally, removing $S^k\times \mathring D^{n-k}$ from some $n$\nb-manifold $M$ and gluing back in $D^{k+1}\times  S^{n-k-1}$ is referred to as a {\em $k$\nb-surgery} on $M$, or {\em surgery of codimension $n-k$}. In particular, the connected sum $M_1\# M_2$ is obtained by a codimension $n$\nb-surgery from the disjoint union $M_1\amalg M_2$.

Our next goal is to talk about the connected sum $h_1\# h_2$ of positive scalar curvature metrics $h_1$, $h_2$ on closed manifolds of dimension $n\ge 3$.
According to a fundamental result of Gromov-Lawson \cite[Theorem A]{GL1}, if a closed manifold $M$ carries a \pscm, then any manifold $N$ obtained from $M$ by a surgery of codimension $\ge 3$ also carries a \pscm. In particular, if $M_1$ and $M_2$ are manifolds of dimension $n\ge 3$ which admit \pscm s $h_1$, $h_2$, then also their connected sum $M_1\# M_2$ admits a \pscm\ $h$.

It should be emphasized that this is an {\em existence statement}; the construction of $h$ depends not only on the metrics $h_1$ and $h_2$, but on many additional choices made in the construction of $h$. While it is expected that the connected component $[h]\in \pi_0\scr R^+(M_1\# M_2)$ of the metric $h$ in the space $\scr R^+(M_1\# M_2)$ depends only on $[h_1]\in \pi_0\scr R^+(M_1)$ and $[h_2]\in \pi_0\scr R^+(M_2)$, this is not obvious.

For our definition of $h_1\# h_2$ we will use a refinement of the Gromov-Lawson result due to Gajer \cite{Ga}. Let $M_1$, $M_2$ be closed manifolds of dimension $n$, and consider the boundary connected sum
\begin{equation}\label{eq:trace}
W=([-1,0]\times M_1)\#_{\p} ([-1,0]\times M_2),
\end{equation}
see figure \ref{fig:decomp} below. This manifold is also known as {\em trace of a $0$\nb-surgery on $M_1\amalg M_2$}. According to the second theorem in the introduction of \cite{Ga}, a \pscm\ $h_1\amalg h_2$ on $M_1\amalg M_2$ (which amounts to a \pscm\ $h_1$ on $M_1$ and a \pscm\ $h_2$ on $M_2$) extends to a \pscm\ $g$ on $W$, which is the product metric near the boundary.

\begin{defn}\label{def:conn_sum_pscm} Let $M_1$, $M_2$ be closed manifolds of dimension $n\ge 3$,  and let $h_1$, $h_2$ be \pscm s on $M_1$ resp.\ $M_2$. Then we write $h_1\# h_2$ for any \pscm\ on $M_1\# M_2$ which is the restriction of a metric $g\in \scr R^+(W)$ which
\begin{itemize}
\item restricts to $h_i$ on $M_i\subset \p W$ for $i=1,2$, and
\item is a product metric near the boundary.
\end{itemize}
\end{defn}

We do not claim that the connected component $[h_1\# h_2]\in \pi_0\scr R^+(M_1\# M_2)$ depends only on the connected components of $h_1$ and $h_2$. Fortunately, the ambiguity of the \pscm\ $h_1\# h_2$ does not affect the calculation of our index invariant as the following result shows.

\begin{prop}\label{prop:add}
Let $X_1$, $X_2$ be compact spin manifolds of dimension $4k\ge 4$ with non-empty boundary $\p X_i=M_i$, and let $h_i$ be a \pscm\ on $M_i$.  Then
\[
I(X_1\#_{\p} X_2,h_1\# h_2)=I(X_1,h_1)+I(X_2,h_2).
\]
\end{prop}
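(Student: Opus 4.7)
My plan is to realize $X_1\#_\p X_2$ as a union of three pieces glued along two closed hypersurfaces, apply the Gluing Lemma twice, and then kill the middle contribution via the Vanishing Lemma.

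The first (topological) step is to identify
\[
X_1\#_\p X_2\;\cong\; X_1\cup_{M_1} W\cup_{M_2} X_2,
\]
where $W=([-1,0]\times M_1)\#_\p ([-1,0]\times M_2)$ is the trace of a $0$-surgery on $M_1\amalg M_2$ as in \eqref{eq:trace}, with gluing disks chosen in $\{0\}\times M_i$. Then $\p W=M_1\amalg M_2\amalg(M_1\# M_2)$, where the first two copies sit at $\{-1\}\times M_i$. Attaching the collar $[-1,0]\times M_i\subset W$ to $X_i$ along $M_i\cong\{-1\}\times M_i$ does not change the diffeomorphism type of $X_i$, and once these collars are absorbed into $X_1$ and $X_2$ the rest of $W$ is precisely the bridge $D^1\times D^{4k-1}$ of Definition \ref{def:b_conn_sum}. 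In this identification $M_1$ and $M_2$ become closed hypersurfaces sitting in the interior of $X_1\#_\p X_2$.

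The second step is metric. By Gajer's theorem (recalled just before Definition \ref{def:conn_sum_pscm}), there is a \pscm\ $g$ on $W$ which is a product near $\p W$, restricts to $h_i$ on $\{-1\}\times M_i$, and restricts to $h_1\# h_2$ on the remaining boundary component $M_1\# M_2$; indeed this is precisely how $h_1\# h_2$ is defined. I would then choose totally geodesic extensions $g_i$ of $h_i$ to $X_i$ that are product metrics near $M_i$ and glue $g_1$, $g$, $g_2$ to produce a Riemannian metric on $X_1\#_\p X_2$ which is totally geodesic along the boundary $M_1\# M_2$ and is a product metric in neighborhoods of the separating hypersurfaces $M_1$ and $M_2$. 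Applying the Gluing Lemma \ref{lem:gluing} first along $M_1$ and then along $M_2$ yields
\begin{align*}
I(X_1\#_\p X_2,\,h_1\# h_2)
&=I(X_1,h_1)+I(W\cup_{M_2}X_2,\,h_1\amalg(h_1\# h_2))\\
&=I(X_1,h_1)+I(W,\,h_1\amalg h_2\amalg(h_1\# h_2))+I(X_2,h_2).
\end{align*}

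Finally, the metric $g$ on $W$ is itself a \pscm\ which is totally geodesic along $\p W$ and restricts to the prescribed boundary metric on the nose, so the Vanishing Lemma \ref{lem:vanishing} gives $I(W,\,h_1\amalg h_2\amalg(h_1\# h_2))=0$ and the proposition follows. The main obstacle is the first step: making the topological identification precise enough, with consistent choices of embedded disks for both the boundary connected sum $X_1\#_\p X_2$ and the trace $W$, so that Gajer's metric on $W$ truly glues with the $g_i$ to produce a metric on $X_1\#_\p X_2$ inducing $h_1\# h_2$ on the boundary. Once this topological-metric picture is in place, the rest is a formal application of Lemmas \ref{lem:gluing} and \ref{lem:vanishing}.
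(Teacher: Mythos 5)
Your proposal is correct and follows essentially the same route as the paper: decompose $X_1\#_{\p}X_2$ into $X_1$, $X_2$ (up to collars) and the trace $W$ of the $0$\nb-surgery, apply the Gluing Lemma \ref{lem:gluing} (you do it twice along $M_1$ and $M_2$, the paper once along the disconnected hypersurface $M_1'\amalg M_2'$, which is the same thing), and kill $I(W,h_1\amalg h_2\amalg(h_1\# h_2))$ via Gajer's product-near-boundary \pscm\ and the Vanishing Lemma \ref{lem:vanishing}. The topological identification you flag as the main obstacle is handled in the paper exactly as you sketch it, by cutting $X_i$ along $\{-1\}\times M_i$ in a collar and observing that the collars together with the bridge $D^1\times D^{4k-1}$ form $W$.
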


We note that if $X_2=D^{4k}$, then $X_1\#_{\p} X_2$ is diffeomorphic to $X_1$. Hence $h_1\# h_2$ can be interpreted as a \pscm\ on $X_1$, and the above formula gives
\[
I(X_1,h_1\# h_2)=I(X_1,h_1)+I(D^{4k},h_2),
\]
as claimed in Proposition \ref{prop:pscm_conn_sum}.

\begin{proof}[Proof of Proposition \ref{prop:add}]
The proof is obtained by applying Gluing Lemma \ref{lem:gluing} to calculate the invariant $I(X_1\#_{\p} X_2,h_1\# h_2)$ in terms of a decomposition of $X_1\#_{\p} X_2$ into three pieces as follows. Identifying a tubular neighborhood of $M_i=\p X_i\subset X_i$ with $[-1,0]\times M_i$, we denote by $X_i'$ the manifold with boundary $X_i':=X_i\setminus (-1,0]\times M_i$, and by $M_i'$ its boundary $M_i':=\p X_i'=\{-1\}\times M_i$. Then $M_1'$ and $M_2'$ are hypersurfaces in $X_1\#_{\p} X_2$ which decompose this manifold into three manifolds with boundary: $X_1'$, $X_2'$ and the manifold $W$ discussed in \eqref{eq:trace}, the boundary connected sum $W$ of $[-1,0]\times M_1$ and $[-1,0]\times M_2$. Here is a picture of this decomposition.

\begin{center}
\begin{tikzpicture}[scale=.8]
\draw (0,0).. controls (0,1) and (1,1.5) ..
(2,1.5) .. controls (3,1.5) and (3,1) ..
(4,1).. controls (5,1) and (5,1.5) ..
(6,1.5) .. controls (7,1.5) and (7,1) ..
(8,1)--(9,1);
\draw (0,0).. controls (0,-1) and (1,-1.5) ..
(2,-1.5) .. controls (3,-1.5) and (3,-1) ..
(4,-1).. controls (5,-1) and (5,-1.5) ..
(6,-1.5) .. controls (7,-1.5) and (7,-1) ..
(8,-1)--(9,-1);
\draw (1,0) arc (180:360:1 and .5);
\draw (1.3,-.2) arc (180:0:.7 and .3);
\draw (5,0) arc (180:360:1 and .5);
\draw (5.3,-.2) arc (180:0:.7 and .3);
\draw (9,0) circle (.5 and 1);
\draw[very thick] (8,1) arc (90:270:.5 and 1);
\node at (6,.8){$X'_1$};
\begin{scope}[shift={(4,-5)}]
\draw (0,0).. controls (0,1) and (1,1.5) ..
(2,1.5) .. controls (3,1.5) and (3,1) ..
(4,1)--(5,1);
\draw (0,0).. controls (0,-1) and (1,-1.5) ..
(2,-1.5) .. controls (3,-1.5) and (3,-1) ..
(4,-1)--(5,-1);
\draw (1,0) arc (180:360:1 and .5);
\draw (1.3,-.2) arc (180:0:.7 and .3);
\draw (5,0) circle (.5 and 1);
\draw[very thick] (4,1) arc (90:270:.5 and 1);
\node at (2,.8){$X'_2$};
\end{scope}
\draw ($(9,0)+(30:.5 and 1)$) arc (90:-90:3);
\draw ($(9,0)+(-30:.5 and 1)$) arc (90:-90:2);
\node at (14,-2.5){$D^1\times D^n$};
\draw[very thick]($(9,0)+(30:.5 and 1)$) arc (30:330:.5 and 1) arc (90:-90:2)
arc (30:330:.5 and 1) arc (-90:90:3);
\draw[snake=brace,mirror snake,thick] (7.75,-6.5) --node[below]{$W$}(12.5,-6.5);
\node at (8,1.5){$M'_1$};
\node at (8,-3.5){$M'_2$};
\node at (15,1)(1){$[-1,0]\times M_1$};
\draw[->](1)--(8.2,0);
\node at (15,-6)(2){$[-1,0]\times M_2$};
\draw[->](2)--(8.2,-5);
\end{tikzpicture}

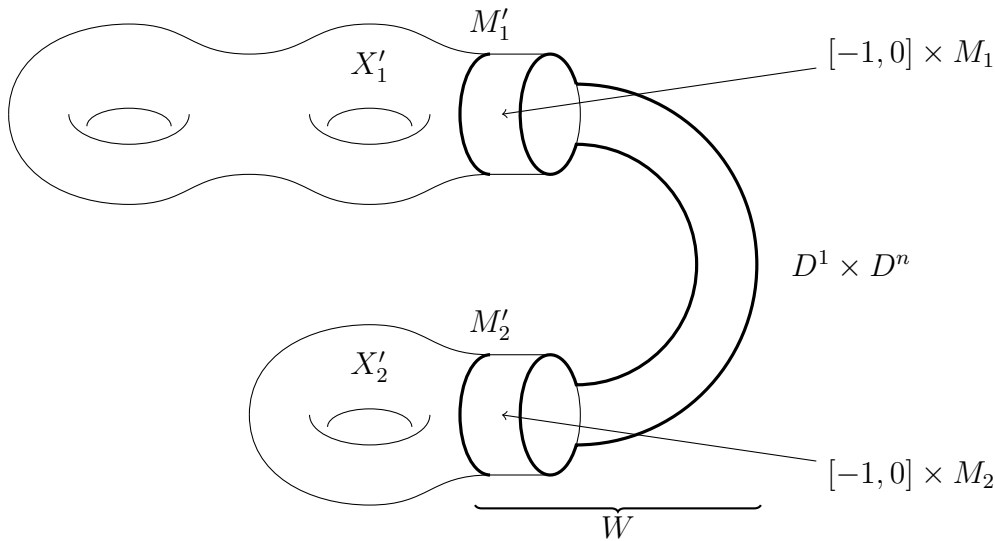
\captionof{figure}{Decomposition of $X_1\#_{\p} X_2$}\label{fig:decomp}
\end{center}

By the result of Gajer \cite{Ga} mentioned above, the \pscm\ $h_1\amalg h_2$ on $M_1\amalg M_2\cong M'_1\amalg M'_2\subset \p W$ extends to a \pscm\ $g$ on $W$ which is a product metric near the boundary. The restriction of $g$ to $M_1\# M_2\subset\p W$ is the \pscm\ we denote by $h_1\# h_2$.

Applying Gluing Lemma \ref{lem:gluing} to the decomposed manifold
\[
X_1\#_{\p} X_2=
(X_1'\amalg X_2')\cup_{M_1'\amalg M_2'} W
\]
we obtain
\begin{align*}
I(X_1\#_{\p} X_2,h_1\# h_2)
=&I(X_1'\amalg X_2',h_1\amalg h_2)+I(W,h_1\amalg h_2\amalg (h_1\# h_2))\\
=&I(X'_1,h_1)+I(X'_2,h_2)+I(W,h_1\amalg h_2\amalg (h_1\# h_2)).
\end{align*}
Here we abuse notation and write $h_i$ for the \pscm\ on $M_i'$ corresponding to $M_i$ via obvious diffeomorphism between $M_i'=\{-1\}\times M_i$ and $M_i$. This diffeomorphism extends to a diffeomorphism between $X_i'$ and $X_i$ and hence $I(X_i',h_i')=I(X_i,h_i)$. Moreover,
\[
I(W,h_1\amalg h_2\amalg (h_1\# h_2))=0
\]
by Lemma \ref{lem:vanishing}, since by construction $g$ is a \pscm\ on $W$ with totally geodesic boundary extending the metric $h_1\amalg h_2\amalg (h_1\# h_2)$ on $\p W$. This proves Proposition \ref{prop:add}.
\end{proof}

\section{Positive scalar curvature metrics on $S^{4k-1}$ with non-trivial $I$-invariant}\label{sec:exotic_pscm}

The purpose of this section is to provide proofs for Theorem \ref{ThmB}, Proposition \ref{prop:intro} and  Proposition \ref{prop:exotic_pscm}. The first step is a characterization of the possible values of $\wh A(Y)\in\Z$ for closed spin manifolds $Y$.

\begin{lem}\label{lem:Ahat} Let $k\ge 1$ be an integer, and let $\ell$ be an integer which is assumed to be even if $k$ is odd.
Then there is a closed spin manifold $Y$ of dimension $4k$ with $\wh A(Y)=\ell$.
\end{lem}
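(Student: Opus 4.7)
My plan is to construct $Y$ as an iterated connected sum of standard ``building-block'' spin manifolds whose $\wh A$\nb-genera are known. The two essential properties I will use are: (i) multiplicativity of $\wh A$ under products, $\wh A(M\times N)=\wh A(M)\wh A(N)$, which is built into the $\wh A$\nb-polynomial; and (ii) additivity of $\wh A$ under connected sums of closed spin manifolds, $\wh A(M_1\# M_2)=\wh A(M_1)+\wh A(M_2)$. Property (ii) holds because the trace of the $0$\nb-surgery joining $M_1\amalg M_2$ to $M_1\# M_2$ is a spin cobordism, and $\wh A$ is a spin cobordism invariant (equivalently, all Pontryagin numbers of $S^{4k}$ in positive degrees vanish). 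I will also use that $\wh A(-Y)=-\wh A(Y)$ under orientation reversal, which is immediate from the definition of $\wh A(Y)$ as the evaluation of a characteristic class on the fundamental class $[Y]$.

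For the building blocks I would take two classical simply connected spin manifolds: the Kummer (K3) surface, of dimension $4$ with $\wh A(K3)=2$, and the $8$\nb-dimensional Bott manifold $B$, with $\wh A(B)=1$. Writing $k=2m$ in the even case and $k=2m+1$ in the odd case, set
\[
Y_0:=\begin{cases} B^{\times m} & \text{$k$ even,}\\ K3\times B^{\times m} & \text{$k$ odd,}\end{cases}
\]
with $B^{\times 0}$ understood as a point. By (i), $Y_0$ is a closed spin manifold of dimension $4k$ with $\wh A(Y_0)=1$ when $k$ is even and $\wh A(Y_0)=2$ when $k$ is odd. Given $\ell$ as in the statement, set $n:=\ell$ if $k$ is even and $n:=\ell/2$ if $k$ is odd, so that $n\in\Z$ under the parity hypothesis on $\ell$. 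Take $Y$ to be the connected sum of $|n|$ copies of $Y_0$ (using $-Y_0$ in place of $Y_0$ when $n<0$), or $Y:=S^{4k}$ when $n=0$. Then $Y$ is a closed spin $4k$\nb-manifold, and combining (ii) with the orientation-reversal formula yields $\wh A(Y)=n\,\wh A(Y_0)=\ell$, as required.

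The only non-routine ingredient is the existence of the Bott manifold with $\wh A=1$, which I would simply cite from standard references (e.g., Milnor's lectures on spin cobordism or Stolz's work on positive scalar curvature) rather than reconstruct. Everything else reduces to elementary properties of characteristic numbers, products, and connected sums of spin manifolds, so I do not anticipate any real obstacle in carrying out this plan.
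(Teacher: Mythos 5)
Your proof is correct and follows essentially the same route as the paper: take a $4$\nb-dimensional block with $\wh A=2$ (the Kummer/K3 surface) and an $8$\nb-dimensional block with $\wh A=1$ (you use the Bott manifold where the paper cites Joyce's $Spin(7)$ $8$\nb-manifolds, but they play identical roles), form products to reach dimension $4k$, and realize all allowed values using copies of this manifold together with orientation reversal. The only cosmetic difference is that you combine copies by connected sum (justified via spin-cobordism invariance of $\wh A$), whereas the paper simply takes disjoint unions; both are valid.
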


\begin{proof}
There are explicit  spin manifolds of dimension $4$ (resp.\ $8$) with $\wh A$\nb-genus  $2$ (resp.\ $1$):
\begin{itemize}
\item The {\em Kummer surface} $K$ is the complex codimension $1$ submanifold of the complex projective space $\C\P^3$ given by a quartic equation:
\begin{equation}\label{eq:Kummer}
K=\{[z_0,z_1,z_2,z_3]\in \C\P^3\mid z_0^4+z_1^4+z_2^4+z_3^4=0\}.
\end{equation}
This is a closed simply connected spin manifold of dimension $4$ with $\wh A(K)=2$.
\item Joyce has constructed explicit examples of closed simply connected spin $8$\nb-manifolds $J$ with $\wh A(J)=1$ \cite[Section 3]{Joy}. His construction is given by resolving the singularities of the quotient $T^8/\Gamma$ of a finite group $\Gamma$ acting on the $8$\nb-dimensional torus $T^8$. He goes on to show that some of these manifolds admit Riemannian metrics with holonomy $Spin(7)$ \cite[Proposition 2.1.1]{Joy}.
\end{itemize}
Using the multiplicative property
\[
\wh A(Y_1\times Y_2)=\wh A(Y_1)\cdot\wh A(Y_2),
\]
it follows that the Cartesian product $J\times\dots\times J$ (resp.\ $K\times J\times\dots\times J$) provides an example of a $4k$\nb-manifold $Y$ with $\wh A(Y)=1$ for even $k$ (resp.\ $\wh A(Y)=2$ for odd $k$). Then the disjoint union of $m\ge 0$ copies of $Y$ is a spin manifold with $\wh A$\nb-genus$= m$ (for $k$ even), and $\wh A$\nb-genus$= 2m$ (for $k$ odd). Moreover, if $\bar Y$ is the manifold $Y$ with the opposite orientation, then $\wh A(\bar Y)=-\wh A(Y)$ implies that the disjoint union of $m\ge 0$ copies of $\bar Y$ has $\wh A$\nb-genus$= -m$ (if $k$ is even), and $\wh A$\nb-genus$= -2m$ (if $k$ is odd).
\end{proof}

As a corollary of this result, we obtain the following statement which implies both Theorem \ref{ThmB} and Proposition \ref{prop:intro}.

\begin{cor}\label{cor:Ahat} Let $X$ be a compact spin manifold of dimension $4k\ge 4$, let $h\in \scr Y^+(\p X)$, and let  $\ell$ be an integer which is assumed to be even if $k$ is odd. Then there is some closed spin manifold $Y$ such that $I(X\# Y,h)=\ell$.
\end{cor}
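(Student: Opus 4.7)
The plan is to combine Proposition \ref{prop:conn_sum} with Lemma \ref{lem:Ahat}, after handling a parity issue that arises when $k$ is odd.

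By Proposition \ref{prop:conn_sum}, for any closed spin $4k$-manifold $Y$ we have
\[
I(X\#Y,h)=I(X,h)+\wh A(Y).
\]
So to achieve $I(X\#Y,h)=\ell$ it suffices to find $Y$ with $\wh A(Y)=\ell-I(X,h)$. When $k$ is even, Lemma \ref{lem:Ahat} realizes every integer as $\wh A(Y)$, so such a $Y$ exists and we are done.

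The main obstacle is the case when $k$ is odd: Lemma \ref{lem:Ahat} only realizes even integers. Since $\ell$ is assumed even, this reduces to showing that $I(X,h)$ is itself even when $k$ is odd. The key input is the standard fact that in dimensions $4k$ with $k$ odd, the spinor bundle $S(X)$ carries a parallel quaternionic structure; restricted to $S^\pm(X)$ this makes each $S^\pm(X)$ a quaternionic bundle, and the Dirac operator $D^+$ is quaternionic linear. I would verify that the APS boundary projection $P_{\ge 0}^t$ is likewise compatible with the quaternionic structure on the spinor bundle of $\p X$ (this follows because the Clifford structure, and hence the Dirac spectrum and its eigenspace decomposition, are quaternionic), so that the APS-harmonic spaces $\mathscr H^{\pm}(X,g)$ are quaternionic vector spaces. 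Their complex dimensions are therefore even, so
\[
I(X,h)=\dim_\C\mathscr H^{+}(X,g)-\dim_\C\mathscr H^{-}(X,g)\in 2\Z.
\]
Consequently $\ell-I(X,h)\in 2\Z$, and Lemma \ref{lem:Ahat} again produces the desired $Y$.

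As a sanity check, the special case $X=D^{4k}$ with $h$ the round metric gives $I(D^{4k},h)=0$ (by Lemma \ref{lem:vanishing}), and then $I(D^{4k}\# Y,h)=\wh A(Y)$ recovers exactly the range $\Z$ (for $k$ even) or $2\Z$ (for $k$ odd) predicted by Proposition \ref{prop:exotic_pscm}, which is consistent with the parity claim above. Having established the corollary, Theorem \ref{ThmB} follows by choosing $\ell\ne 0$, and Proposition \ref{prop:intro} follows by choosing $\ell=0$.
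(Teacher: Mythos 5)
Your proposal is correct and follows essentially the same route as the paper: apply Proposition \ref{prop:conn_sum} together with Lemma \ref{lem:Ahat}, and for odd $k$ reduce to the evenness of $I(X,h)$ via the quaternionic structure on the spinor bundle in dimension $4k\equiv 4 \pmod 8$. Your extra remark that the APS projection is compatible with the quaternionic structure (so that the APS kernel and cokernel are genuinely quaternionic vector spaces) is a welcome detail that the paper's proof leaves implicit, but it does not change the argument.
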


\begin{proof}
According to Proposition \ref{prop:conn_sum}, $I(X\# Y,h)=I(X,h)+\wh A(Y)$. By Lemma \ref{lem:Ahat} for even $k$, a suitable choice of $Y$ produces any desired integral value for $\wh A(Y)$ and hence for $I(X\# Y,h)$.

For odd $k$, the values for $\wh A(Y)$ are the even integers. So it suffices to show that the index invariant $I(X,h)$ is even for odd $k$, since then $I(X,h)+\wh A(Y)$ gives any desired even integer for a suitable choice of $Y$.

We recall that $I(X,h)$ is the index of the Dirac operator $D^+(X,g)$, where $g$ is a metric on $X$ which extends $h$ and for which $\p X$ is totally geodesic. If $k$ is odd, the spinor bundle of $X$ is a quaternionic bundle, and the operator $D^+(X,g)$ is quaternionic linear. In particular, its kernel and cokernel are quaternionic vector spaces, and hence they are even dimensional as complex vector spaces. In particular, the index of $D^+(X,g)$ is even.
\end{proof}

The rest of this section is devoted to a proof of Proposition \ref{prop:exotic_pscm} for $k\ge 2$.
In other words, we construct \pscm s $h$ on the sphere $S^{4k-1}$ for $k\ge 2$ with non-trivial index invariant $I(D^{4k},h)\in \Z$. More precisely, we need to show that there are \pscm s $h_\ell\in \scr R^+(S^{4k-1})$ such that $I(D^{4k},h_\ell)=\ell$ for any integer $\ell$ if $k$ is even, and for any even integer $\ell$ if  $k$ is odd.

Our strategy for constructing \pscm s $h$ with $I(D^{4k},h)\ne 0$ is based on the following result.

\begin{lem}\label{lem:Index} Let $Y$ be a closed spin manifold of dimension $4k$ and let $g$ be a \pscm\ on $X:=Y\setminus \mathring D^{4k}$ which is a product metric near the boundary. Let $h\in \scr R^+(S^{4k-1})$ be the restriction of $g$ to $\p X=S^{4k-1}$. Then $I(D^{4k},h)=\wh A(Y)$.
\end{lem}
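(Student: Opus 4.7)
The plan is to recognize the closed spin manifold $Y$ as the union of $X=Y\setminus\mathring D^{4k}$ and a standard $4k$\nb-disk along their common boundary $S^{4k-1}$, and then reduce the desired identity $I(D^{4k},h)=\wh A(Y)$ to an application of the Gluing Lemma combined with the Vanishing Lemma. The hypotheses of Lemma \ref{lem:Index} are tailor-made for this: the assumption that $g$ is a product metric near $\p X$ guarantees that the boundary $S^{4k-1}$ is totally geodesic in both $(X,g)$ and in the $4k$\nb-disk after extending $g$ across the boundary, which is the compatibility needed to invoke the gluing formula.

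First, I would extend the given product metric $g$ on $X$ to a Riemannian metric $\wh g$ on all of $Y$ by choosing any metric on the complementary disk $D^{4k}\subset Y$ that agrees with $g$ in a collar of $\p D^{4k}=S^{4k-1}$; the product structure of $g$ near $\p X$ makes this smooth extension straightforward and ensures that $S^{4k-1}\subset Y$ is totally geodesic in $(Y,\wh g)$. Viewing $Y$ as the decomposition $Y=X\cup_{S^{4k-1}}D^{4k}$ with empty total boundary, the Gluing Lemma \ref{lem:gluing} yields
\[
\wh A(Y)=I(Y)=I(X,h)+I(D^{4k},h),
\]
where I use that for a closed spin $4k$\nb-manifold the invariant $I$ reduces to the $\wh A$\nb-genus, as noted after the definition of $I$.

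Second, I would observe that the pair $(X,h)$ falls directly under the hypotheses of the Vanishing Lemma \ref{lem:vanishing}: the metric $g$ is itself a totally geodesic extension of $h$ (by the product-metric assumption near $\p X$) and has positive scalar curvature by hypothesis. Therefore $I(X,h)=0$, and substituting into the gluing identity gives exactly $I(D^{4k},h)=\wh A(Y)$, as required. Note that the positive Yamabe invariant of $h$, which is needed to even define $I(X,h)$ and $I(D^{4k},h)$, is automatic since $h$ is the restriction of a \pscm\ to a totally geodesic hypersurface, so $h$ itself has positive scalar curvature.

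There is no real obstacle here; the argument is a clean bookkeeping exercise. The only subtlety worth flagging is that the Gluing Lemma is stated and proved for a decomposition of a manifold with possibly nonempty boundary, and one must apply it in the degenerate case $\p Y=\emptyset$ (so $M_-=M_+=\emptyset$) with the single internal hypersurface $M=S^{4k-1}$ equipped with the metric $h$. The proof of the Gluing Lemma given in the previous section handles this case without modification, since the boundary $\eta$\nb-terms corresponding to $M_\pm$ simply drop out, leaving the clean identity $\wh A(Y)=I(X,h)+I(D^{4k},h)$.
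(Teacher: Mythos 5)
Your proposal is correct and follows essentially the same route as the paper: decompose $Y=X\cup_{S^{4k-1}}D^{4k}$, apply the Gluing Lemma (in its closed-manifold case $\p Y=\emptyset$) to get $\wh A(Y)=I(X,h)+I(D^{4k},h)$, and then use the Vanishing Lemma with the totally geodesic \pscm\ extension $g$ to conclude $I(X,h)=0$. The extra remarks about extending $g$ over the disk and about positivity of the Yamabe invariant of $h$ are harmless elaborations of what the paper leaves implicit.
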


\begin{proof} Decomposing $Y$ as $Y=X\cup_{S^{4k-1}}D^{4k}$, Gluing Lemma \ref{lem:gluing} implies
\[
\wh A(Y)=I(Y)=I(X,h)+I(D^{4k},h).
\]
Moreover, since $h$ is the restriction of the \pscm\ $g$ on $X$, which is a product metric near the boundary, Vanishing Lemma  \ref{lem:vanishing} implies $I(X,h)=0$.
\end{proof}

Next we need to address whether for a given closed spin manifold $Y$ of dimension $n+1$, we can construct a \pscm\ $g$ on $X=Y\setminus \mathring D^{n+1}$ which is a product metric near the boundary. The following result shows that this is possible under mild dimension and connectivity restrictions on $Y$.

\begin{prop}\label{prop:pscm}
Let $Y$ be a closed manifold of dimension $n+1\ge 6$ which is $2$\nb-connected, that is, $\pi_i(Y)=0$ for $i\le 2$.  Then $X:=Y\setminus \mathring D^{n+1}$ admits a \pscm\ $g$ which is the product metric near the boundary.
\end{prop}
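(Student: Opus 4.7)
The plan is to combine standard handle theory for the closed manifold $Y$ with the Gromov-Lawson-Gajer surgery construction already invoked in Section \ref{sec:gluing}.

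First, I would establish a handle decomposition of $Y$ all of whose handles have index $0$, $n+1$, or lie in the range $\{3,4,\ldots,n-2\}$. Since $Y$ is $2$-connected, it is in particular simply connected and hence orientable, and Poincar\'e duality together with $\pi_1(Y)=\pi_2(Y)=0$ yields $H_{n-1}(Y;\Z)=H_n(Y;\Z)=0$. Starting from any handle decomposition of $Y$ with one $0$-handle and one $(n+1)$-handle, Smale's handle-cancellation technique (at the heart of the proof of the $h$-cobordism theorem, applicable because $\dim Y=n+1\ge 6$) allows one to cancel all $1$- and $2$-handles against higher-index handles using $\pi_1(Y)=\pi_2(Y)=0$, and dually to cancel all $(n-1)$- and $n$-handles using the vanishing of $H_{n-1}(Y)$ and $H_n(Y)$. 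Removing the interior of the top $(n+1)$-handle realizes $X=Y\setminus \mathring D^{n+1}$ as the result of attaching to the $0$-handle $D^{n+1}$ a finite sequence of handles of index $k$ with $3\le k\le n-2$.

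Next, I would construct the desired metric $g$ inductively along this handle decomposition. Start with $X_0=D^{n+1}$ equipped with a rotationally symmetric metric of the form $dr^2+\phi(r)^2 g_{S^n}$, with $\phi$ chosen so that the metric is smooth at the origin, has positive scalar curvature, and is a product $dr^2+c^2 g_{S^n}$ near the boundary $r=1$; this is a \pscm\ on $D^{n+1}$ which is a product metric near $\p D^{n+1}$. Suppose inductively that $X_j$ is a compact manifold obtained by attaching some of the handles and equipped with a \pscm\ $g_j$ that is a product near $\partial X_j$. The next $k$-handle attachment ($3\le k\le n-2$) corresponds to a surgery on $\partial X_j$ of codimension $n+1-k\in\{3,\ldots,n-2\}$, hence of codimension at least $3$. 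By the theorem of Gajer \cite{Ga} already used in the proof of Proposition \ref{prop:add}, the metric $g_j$ extends to a \pscm\ $g_{j+1}$ on $X_{j+1}$, the union of $X_j$ with the trace of this surgery, that is again a product metric near $\partial X_{j+1}$. Iterating until all handles are attached yields the required metric $g$ on $X$.

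The main obstacle is the first step: arranging the handle decomposition so that every handle has index strictly greater than $2$ and strictly less than $n-1$. The lower bound is the classical Smale cancellation argument, used to eliminate $1$- and $2$-handles thanks to $\pi_1(Y)=\pi_2(Y)=0$ and $\dim Y\ge 6$. The upper bound follows by applying the same argument to the dual handle decomposition, in which the roles of indices $k$ and $n+1-k$ are interchanged; the $2$-connectedness hypothesis on $Y$ translates via Poincar\'e duality into the homological vanishing $H_{n-1}(Y)=H_n(Y)=0$ needed to execute the cancellation on the dual side. Once the handle decomposition is in place, the metric construction is a routine inductive application of the Gajer refinement of the Gromov-Lawson surgery theorem.
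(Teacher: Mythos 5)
Your argument is correct and rests on the same two pillars as the paper's: Smale-type handle theory to get a decomposition with handles of codimension $\ge 3$, and Gajer's extension theorem to push a \pscm\ (product near the boundary) across such handles. The difference is one of packaging: the paper does not redo the handle theory at all, but quotes the corollary in the introduction of \cite{Ga} -- if $\dim X\ge 6$ and $\pi_1(X,\p X)=\pi_2(X,\p X)=0$, then $X$ carries a \pscm\ which is a product near $\p X$ -- and verifies its hypotheses in one line from the long exact homotopy sequence of the pair $(X,S^n)$, using that $X\into Y$ is an isomorphism on $\pi_i$ for $i\le 2$. Your proof essentially re-derives that corollary by hand (Gajer himself proves it by exactly your combination of Smale's cancellation techniques and the handle-extension theorem), so it is self-contained but longer; the paper's version buys brevity at the price of quoting a packaged statement.

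Two small points about your handle-theoretic step. First, you work harder than necessary: $1$\nb- and $2$\nb-handles attached to the $0$\nb-handle have codimension $n$ and $n-1\ge 4$, so they cause no trouble for the metric extension; all you need is the absence of handles of index $n-1$, $n$ (and a single $(n+1)$\nb-handle to delete), i.e.\ index $\le n-2$ for $X$, which is exactly the dual form of the relative $2$\nb-connectivity $\pi_1(X,\p X)=\pi_2(X,\p X)=0$. Second, the stronger claim that all indices can be forced into $\{3,\dots,n-2\}$ by first trading away $1$\nb- and $2$\nb-handles and then dually trading away $(n-1)$\nb- and $n$\nb-handles is not immediate from the procedure as stated when $n+1=6$: trading a dual $2$\nb-handle introduces a dual $4$\nb-handle, i.e.\ an original handle of index $n-3$, which for $n=5$ is a $2$\nb-handle again. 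This does not affect your conclusion, since such handles still have codimension $\ge 3$; simply drop the unneeded lower bound on the index and the inductive metric construction goes through verbatim.
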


This proposition is essentially a special case of a result of \cite{Ga}. The main result of that paper, the second theorem of the introduction, is the statement that a \pscm\ on a compact manifold $X$ which is a product metric near $\p X$ can be extended over handles of codimension $\ge 3$. In particular, if  $X$ has a handle decomposition involving only handles of codimension $\ge 3$, then $X$ has a \pscm\ which is a product metric near $\p X$. Gajer then uses techniques developed by Smale in his proof of the h-Cobordism Theorem to deduce the following result, formulated as corollary in the introduction of \cite{Ga}.

\begin{cor}[Gajer] If $X$ is a compact manifold of dimension $n+1\ge 6$ with a connected non-empty boundary $\p X$ and $\pi_1(X,\p X)=\pi_2(X,\p X)=0$, then there exists a \pscm\ on $X$ which is a product metric near the boundary.
\end{cor}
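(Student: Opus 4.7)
The plan is to combine Gajer's main extension theorem---that a \pscm\ which is product near the boundary can be extended across any handle of codimension $\ge 3$---with a Morse-theoretic construction of an absolute handle decomposition of $X$ in which every handle has codimension $\ge 3$. Once such a decomposition is in hand, one starts with a torpedo metric on the initial $0$-handle $D^{n+1}$ (a \pscm\ that is a round cylinder near $\p D^{n+1}=S^n$) and extends inductively, handle by handle, via Gajer's theorem; the resulting \pscm\ on $X$ is automatically a product metric near $\p X$.

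To build the decomposition, first remove an open ball from the interior of $X$ and set $Y:=X\setminus\mathring D^{n+1}$, a cobordism from $\p X$ to $S^n$. Since $X$ is obtained from $Y$ by attaching a single $(n+1)$-cell along $S^n\subset\p Y$, cellular approximation gives $\pi_k(X,Y)=0$ for $k\le n$. The long exact sequence of the triple $(X,Y,\p X)$ then yields $\pi_k(Y,\p X)\cong\pi_k(X,\p X)=0$ for $k=1,2$ (using $n\ge 5$, so $n-1\ge 2$).

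Next I invoke the Smale handle-cancellation techniques underpinning the h\nb-cobordism theorem. Since $\dim Y=n+1\ge 6$ and the inclusion $\p X\hookrightarrow Y$ is $2$\nb-connected, the standard trading argument produces a handle decomposition of $Y$ relative to $\p X$ involving only handles of index $\ge 3$. Dualizing by replacing the associated Morse function $f$ with $1-f$ swaps the two boundary components of $Y$ and sends a critical point of index $k$ to one of index $(n+1)-k$. Hence $Y$ also admits a handle decomposition relative to $S^n$ whose handles all have index $\le n-2$, i.e.\ codimension $\ge 3$ in the ambient $(n+1)$\nb-manifold.

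Assembling the pieces, $X$ is built from the empty manifold by first attaching a single $0$\nb-handle $D^{n+1}$ (of codimension $n+1\ge 3$) and then the codimension-$\ge 3$ handles coming from the $S^n$\nb-relative decomposition of $Y$. Equip the $0$\nb-handle with a torpedo metric and propagate across successive handles using Gajer's main theorem to obtain the desired \pscm\ on $X$ which is a product metric near $\p X$. The essential difficulty is the Smale trading of $1$- and $2$\nb-handles: this is where Whitney-disk manipulations enter and the hypothesis $n+1\ge 6$ is crucial; once the trading is done, the dualization via $f\mapsto 1-f$ and the inductive application of Gajer's extension theorem are formal.
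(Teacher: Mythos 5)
The paper does not actually prove this statement: it is quoted verbatim from the introduction of Gajer's paper \cite{Ga}, with only the remark that Gajer deduces it from his main extension theorem by means of Smale's handle-trading techniques. Your proposal reconstructs exactly that deduction---use handle trading in dimension $n+1\ge 6$ to get a relative handle decomposition with handles of index $\ge 3$, dualize to obtain an absolute decomposition of $X$ by handles of codimension $\ge 3$, and propagate a torpedo metric on the initial disk across the handles via Gajer's extension theorem---so it is essentially the same approach, with the Smale/Wall trading of $1$\nb- and $2$\nb-handles invoked rather than proved, just as in the paper. One small simplification: the detour through $Y=X\setminus \mathring D^{n+1}$ and the triple $(X,Y,\p X)$ is unnecessary, since trading handles directly in $(X,\p X)$ (which is $2$\nb-connected by hypothesis) and turning the resulting decomposition upside down already yields an absolute handle decomposition of $X$ whose handles all have codimension $\ge 3$, the dual $0$\nb-handles being the disks on which the torpedo metric is placed.
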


\begin{proof}[Proof of Proposition \ref{prop:pscm}]
The inclusion map $X\into Y$ induces an isomorphism on $\pi_i$ for $i\le 2$. Hence the long exact sequence of homotopy groups of the pair $(X,\p X)=(X,S^{n})$ implies that the hypotheses of Gajer's Corollary are satisfied.
\end{proof}

To prove Proposition \ref{prop:exotic_pscm} for $k\ge 2$, it remains to argue that the closed manifolds of Lemma \ref{lem:Ahat} can be chosen to be $2$\nb-connected for $k\ge 2$. The following lemma shows that  we can modify a given spin manifold of dimension $\ge 6$ by surgeries to make it $2$\nb-connected. Since surgeries do not change the $\wh A$\nb-genus, this finishes the proof of Proposition \ref{prop:exotic_pscm}.

\begin{lem} Let $Y$ be a closed spin manifold of dimension $\ge 6$. Then $Y$ can be made $2$\nb-connected by a sequence of $i$\nb-surgeries for $i=0,1,2$.
\end{lem}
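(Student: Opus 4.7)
The plan is the standard surgery-theoretic procedure for making a closed spin manifold highly connected, applied inductively in $i=0,1,2$. At each stage, I represent a finite generating set of $\pi_i(Y)$ by smooth embeddings $S^i\hookrightarrow Y$, verify that each normal bundle is trivial so that the surgery is defined, perform the surgery with a framing compatible with the spin structure, and check that $\pi_j$ for $j<i$ is not disturbed.

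For $i=0$, a $0$\nb-surgery joins two components by a $1$\nb-handle and canonically extends the spin structure; finitely many such surgeries reduce $Y$ to a connected manifold. For $i=1$, the group $\pi_1(Y)$ is finitely presented because $Y$ is closed, so I fix a finite set of generators and represent each by a smooth embedding $f\colon S^1\hookrightarrow Y$, which exists by general position in dimension $n+1\ge 3$. The normal bundle of an embedded circle in an orientable manifold is trivial, and of the two homotopy classes of framings (differing by the nontrivial element of $\pi_1(\mathrm{SO}(n))\cong\Z/2$, $n\ge 5$) exactly one induces the bounding spin structure on $S^1$, hence extends the ambient spin structure across the $2$\nb-handle that forms the surgery trace; choose that framing. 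Since the belt sphere has dimension $n-1\ge 4$, the $1$\nb-surgery preserves $\pi_0$. Iterating renders $Y$ simply connected and spin.

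For $i=2$, Hurewicz gives $\pi_2(Y)\cong H_2(Y;\Z)$, which is finitely generated for a closed manifold. Each generator is represented by a smooth embedding $f\colon S^2\hookrightarrow Y$, possible by general position since $n+1\ge 5$. The normal bundle $\nu$ has rank $n-1\ge 4$, and oriented rank-$r$ bundles over $S^2$ with $r\ge 3$ are classified by $\pi_1(\mathrm{SO}(r))\cong\Z/2$ through $w_2$. The Whitney sum formula with $w_2(TY)|_{S^2}=0$ (spin) and $w_2(TS^2)=0$ (Euler number $2$) yields $w_2(\nu)=0$, so $\nu$ is trivial and the surgery is defined. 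The surgery trace attaches a $3$\nb-handle $D^3\times D^{n-1}$; since $D^3$ is simply connected, the spin structure on $Y$ automatically extends over the trace. The belt sphere has dimension $n-2\ge 3$, hence is simply connected, so the $2$\nb-surgery preserves $\pi_0$ and $\pi_1$. Finitely many iterations then kill $\pi_2$, completing the proof.

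The main technicality to keep track of is the simultaneous control of the spin structure and the lower homotopy groups: one must verify that the framing of the normal bundle can always be chosen to extend the spin structure across the trace, and that $i$\nb-surgeries with $i\le 2$ preserve the connectivity already achieved. Both reduce to the dimension hypothesis $n+1\ge 6$, which ensures that the belt sphere of an $i$\nb-surgery (of dimension $n-i\ge 3$) is simply connected for $i\le 2$, and that the Stiefel--Whitney obstruction calculation in the $2$\nb-surgery step has enough room to conclude triviality of $\nu$.
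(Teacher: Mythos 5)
Your proof is correct and follows essentially the same route as the paper: perform $0$-, $1$-, and $2$-surgeries on embedded spheres representing generators, with triviality of the normal bundle coming from orientability (for circles) and the spin condition via $w_2$ (for $2$-spheres), and with the dimension hypothesis $\ge 6$ guaranteeing that lower homotopy groups are preserved and that $\pi_i$ is killed. The only difference is that you explicitly track the framings so that the spin structure survives each surgery (needed for the $w_2(\nu)=0$ step later), a point the paper leaves implicit; this is a welcome addition, though your justification for the $2$-surgery case is more naturally phrased as $\pi_2(\mathrm{SO}(n-1))=0$, so the framing is unique up to homotopy and the spin structure extends automatically.
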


\begin{proof}
If $Y$ is not connected, forming the connected sum of two of its components is a $0$\nb-surgery on $Y$. It reduces the number of connected components of $Y$, and so a finite sequence of $0$\nb-surgeries results in a connected manifold.

If $Y$ is connected, but not simply connected, let $S^1\into Y$ be an embedded circle that represents a non-trivial element of $\pi_1(Y)$. The normal bundle of this circle is trivial, since $Y$ is orientable, and hence we can do a $1$\nb-surgery on that circle. The resulting manifold is again  connected; its fundamental group is the quotient of $\pi_1(Y)$ obtained by modding out by the normal subgroup generated by the embedded circle. Since the fundamental group of the compact space $\pi_1(Y)$ is finitely generated, it follows that a finite sequence of $1$\nb-surgeries results in a simply connected manifold.

Similarly, if $Y$ is simply connected, but not $2$\nb-connected, a non-zero element $\alpha\in \pi_2(Y)$ can be represented by an embedded $2$\nb-sphere. Its normal bundle is trivial thanks to the spin condition, allowing us to do surgery on that $2$\nb-sphere. The dimension assumption $\dim Y\ge 6$ guarantees that $\pi_2$ of the resulting manifold is isomorphic to a quotient of $\pi_2(Y)$, given by modding out by the subgroup generated by the element $\alpha$. Hence a finite sequence of $2$\nb-surgeries results in a $2$\nb-connected manifold.
\end{proof}

%
%
%


\begin{thebibliography}{AA}
%
%
%
%



\bibitem{AndersonCFT} Anderson, M. T. ``Geometric aspects of the AdS/CFT correspondence,''  AdS/CFT correspondence: Einstein metrics and their conformal boundaries, 1–31,
IRMA Lect. Math. Theor. Phys., 8, Eur. Math. Soc., Zürich, 2005. 


\bibitem{AndersonGAFA}  Anderson, M. T. ``Einstein metrics with prescribed conformal infinity on 4-manifolds,'' {\em Geom. Funct. Anal.} {\bf 18} (2008), 305-–366.



\bibitem{APS}  Atiyah, M. F., Patodi, V. K., Singer, I. M. ``Spectral asymmetry and Riemannian geometry, I'' {\em Math. Proc. Cambridge Philos. Soc.} {\bf 77} (1975), 43-€"69.

\bibitem{OB1} Biquard, O. ``M\'{e}triques d'Einstein asymptotiquement sym\'{e}triques'' (French) {\em Ast\'{e}risque} No. 265 (2000).  

\bibitem{OB2} Biquard, O. ``M\'{e}triques autoduales sur la boule'' {\em Invent. Math.} {\bf 148} (2002), no. 3, 545–607.  

\bibitem{OB3} Biquard, O. ``D\'{e}singularisation de m\'{e}triques d'Einstein'' {\em Invent. Math.} 204 (2016), no. 2, 473–504.


\bibitem{BBW}  Boo{\ss}-Bavnbek, B., Wojciechowski, K. ``Elliptic boundary problems for Dirac operators'', Mathematics: Theory \& Applications. Birkhäuser Boston, Inc., Boston, MA, 1993. xviii+307 pp.
    
\bibitem{CE} Calderbank, D. M. J., Singer, M. A. ``Einstein metrics and complex singularities'' {\em Invent. Math.} {\bf 156} (2004), no. 2, 405–443.

\bibitem{CD1} Chru\'sciel, P. T., Delay, E. ``Non-singular, vacuum, stationary space-times with a negative cosmological constant'' {\em Ann. Henri Poincar\'{e}} {\bf 8} (2007), no. 2, 219–239.
    
\bibitem{CD2} Chru\'sciel, P. T., Delay, E. ``Non-singular space-times with a negative cosmological constant: II. Static solutions of the Einstein-Maxwell equations'' {\em Lett. Math. Phys.} {\bf 107} (2017), no. 8, 1391–1407.
    

\bibitem{CDLS} Chru\'sciel, P., Delay, E., Lee, J. M., Skinner, D. N.  ``Boundary regularity of conformally compact Einstein metrics,''
{\em J. Differential Geom.} {\bf 69} (2005), 111€"-136.

\bibitem{CQY} Chang, S.-Y. A., Qing, J., Yang, P. ``On the topology of conformally compact Einstein 4-manifolds,'' Noncompact problems at the intersection of geometry, analysis, and topology, 49-–61, {\em Contemp. Math.}, 350, Amer. Math. Soc., Providence, RI, 2004.

\bibitem{DW} Dai, X., Wei, G. ``Hitchin-Thorpe inequality for noncompact Einstein 4-manifolds,'' { \em Adv. Math. } {\bf 214} (2007), 551-570.


\bibitem{EGH}   Eguchi, T., Gilkey, P. B., Hanson, A. J. ``Gravitation, gauge theories and differential geometry,'' {\em Phys. Rep.} {\bf 66} (1980), 213-393.

\bibitem{FG} Fefferman, C.,  Graham, C. R. {\em Conformal invariants}, in {\em Elie Cartan et les Mathematiques d'Aujourd'hui}, Asterisque (1985), 95-116.

\bibitem{Ga} Gajer, P. ``Riemannian metrics of positive scalar curvature on compact manifolds with boundary", Ann. Global Anal. Geom., {\bf 3} (1987), 179-191.

\bibitem{GilkeyPP} Gilkey, P. B. ``The index theorem and the heat equation'', Notes by Jon Sacks. Mathematics Lecture Series, No. 4. Publish or Perish, Inc., Boston, Mass., 1974. iv+125 pp.
    

\bibitem{GLee1991} Graham, C. R., Lee, J. M. ``Einstein metrics with prescribed conformal infinity on the ball.'' {\em Adv. Math.} {\bf 87} (1991), 186-225.

 

\bibitem{GL1} Gromov, M., Lawson, H. B. Jr.,  ``The classification of simply connected manifolds of positive scalar curvature", {\em Ann. Math.} {\bf 111} (1980), 423--434

\bibitem{GL} Gromov, M., Lawson, H. B. Jr., ``Positive scalar curvature and the Dirac operator on complete Riemannian manifolds", Inst. Hautes Études Sci. Publ. Math. No. 58 (1983), 83€"-196 (1984).

\bibitem{GH} Gursky, M. J., Han, Q. ``Non-existence of Poincar\'e-Einstein manifolds with prescribed conformal infinity,'' {\em Geom. Funct. Anal.}  {\bf 27} (2017), 863-879.

\bibitem{HP} Hawking, S. W., Page, D. N.
 ``Thermodynamics of black holes in anti-de Sitter space",
Comm. Math. Phys. {\bf 87} (1982/83), 577-588.

\bibitem{Hijazi} Hijazi, O. ``A conformal lower bound for the smallest eigenvalue of the Dirac operator and Killing spinors,'' {\em Comm. Math. Phys.} {\bf 104} (1986), 151-162.

\bibitem{HMR} Hijazi, O., Montiel, S., Rold\`{a}n, A. ``Eigenvalue boundary problems for the Dirac operator,'' {\em Comm. Math. Phys.} {\bf 231} (2002), 375-390.

\bibitem{Hitchin} Hitchin, N. ``Harmonic spinors,''  {\em Adv. Math.} {\bf 14} (1974), 1€"-55.

\bibitem{Joy} Joyce, D. D. ``Compact $8$-manifolds with holonomy $Spin(7)$", {\em Invent. Math.} {\bf 123} (1996), 507-€"552

\bibitem{Lee}  Lee, J. M. ``The spectrum of an asymptotically hyperbolic Einstein manifold,'' {\em Comm. Anal. Geom.} {\bf 3} (1995), 253-271.

\bibitem{Lee2006} Lee, J. M. ``Fredholm Operators and Einstein Metrics on Conformally Compact Manifolds.'' Mem. A. M. S., Providence, 2006.   

\bibitem{Lich}  Lichnerowicz, A. ``Spineurs harmoniques,'' {\em C. R. Acad. Sci. Paris}  {\bf 257} (1963), 7-9.

\bibitem{Maldecena2} Maldacena, J. ``The large $N$ limit of superconformal field theories and supergravity,'' {\em Adv. Theor. Math. Phys.} {\bf 2} (1998), 231-252.

\bibitem{Mar}   Marques, F. C. ``Deforming three-manifolds with positive scalar curvature,'' {\em Ann. of Math.} {\bf 176} (2012), 815-–863.

\bibitem{MazzeoPacard} Mazzeo, R., Pacard, F. ``Maskit combinations of Poincaré-Einstein metrics'' {\em Adv. Math.} {\bf 204} (2006), no. 2, 379–412.

\bibitem{MV} Morteza, P., Viaclovsky, J. ``The Calabi metric and desingularization of Einstein orbifolds'', arXiv:1610.02428 



\bibitem{Rosenberg}  Rosenberg, J., ``Manifolds of positive scalar curvature: a progress report'',  {\em Surv. Differ. Geom} {\bf 11}, 259-–294,  Int. Press, Somerville, MA, 2007.

\end{thebibliography}
\end{document}